\newcommand{\df}{\dfrac}
 \renewcommand{\a}{\alpha}
\newcommand{\e}{\epsilon}
\renewcommand{\d}{{\delta}}
\renewcommand{\l}{\lambda}
\renewcommand{\k}{\chi}
\renewcommand{\t}{\varphi}
\renewcommand{\i}{\infty}
\renewcommand{\b}{\beta}
\newcommand{\beqs}{\begin{equation*}}
\newcommand{\eeqs}{\end{equation*}}
\numberwithin{equation}{section}
 \theoremstyle{plain}
\newtheorem{theorem}{Theorem}[section]
\newtheorem{lemma}[theorem]{Lemma}
\theoremstyle{remark}
\newtheorem*{remark}{Remark}
\newcommand{\divides}{\mid}
\begin{document}

\title[On  Rogers--Ramanujan functions, binary quadratic forms and eta-quotients]{On  Rogers--Ramanujan functions, binary quadratic forms and eta-quotients}
\author{Alexander Berkovich}
\author{Hamza Yesilyurt}

\address{Department of Mathematics, University of Florida, 358 Little Hall,  Gainesville, FL
  32611, USA}\email{alexb@ufl.edu}
\address{Department of Mathematics, Bilkent University, 06800, Bilkent/Ankara, Turkey}
\email{hamza@fen.bilkent.edu.tr}

\thanks{Alexander Berkovich's research is partially supported by  grant H98230-09-1-0051 from the National Security Agency.}
\thanks{Hamza Yesilyurt's research is partially supported by  grant 109T669 from T\"ubitak.}
\keywords{Eta-quotients, Binary quadratic forms, Rogers--Ramanujan functions, Ramanujan's lost notebook, Thompson series}
\subjclass[2010]{11E16, 11E45, 11F03, 11P84}

\begin{abstract}
In a handwritten
manuscript published with his lost notebook, Ramanujan stated
without proofs forty identities for the Rogers--Ramanujan functions.
We observe that the function that appears in Ramanujan's identities can be obtained from a Hecke action on a certain family  of eta products.
We  establish further Hecke-type relations for these functions involving binary quadratic forms.
Our observations enable us to find new identities for the Rogers--Ramanujan functions and also to use such identities in return to find identities involving binary quadratic forms.
\end{abstract}
\maketitle

\section{Introduction}

The Rogers--Ramanujan functions are defined for $ |q| < 1 $ by
\begin{equation*}
G(q) := \sum_{n=0}^{\i}\df{q^{n^2}}{(q;q)_n} \qquad \text{and} \qquad
H(q) := \sum_{n=0}^{\i}\df{q^{n(n+1)}}{(q;q)_n},
\end{equation*}
where $(a;q)_0 :=1$ and, for $n \geq 1$,
\begin{equation*}
(a;q)_n := \prod_{k=0}^{n-1}(1-aq^k).
\end{equation*}
These functions satisfy the famous Rogers--Ramanujan identities
\cite{rogers1894}
\begin{equation}\label{rridents}
G(q) = \df{1}{(q;q^5)_{\i}(q^4;q^5)_{\i}} \qquad \text{and} \qquad
H(q) = \df{1}{(q^2;q^5)_{\i}(q^3;q^5)_{\i}},
\end{equation}
where
\begin{equation*}
(a;q)_{\i} := \lim_{n\to\i}(a;q)_n, \qquad |q| <1.
\end{equation*}

In a handwritten manuscript published with his lost notebook \cite{lnb},
Ramanujan stated without proofs forty identities for the
Rogers--Ramanujan functions.
These identities were established in a
 series of papers by L.~J.~Rogers
 \cite{rogers} in 1921, G.~N.~Watson \cite{watson} in 1933,  D.~Bressoud
\cite{db} in 1977,    A.~J.~F.~Biagioli
\cite{tony} in 1989, and by the second author \cite{jmaa} in 2012. A detailed history of
Ramanujan's forty identities can be found in \cite{memoir}.

Ramanujan's identities mainly involve the function
\begin{equation*}
 U(r,s,q):=U(r,s)=\begin{cases}
         G(q^r)G(q^s)+q^{(s+r)/5}H(q^r)H(q^s)  & \text{if } s+r\equiv 0 \pmod 5,\\
         H(q^r)G(q^s)-q^{(s-r)/5}G(q^r)H(q^s)  & \text{if } s-r\equiv 0 \pmod 5.
\end{cases}
\end{equation*}

The modular properties of the function $U(r,s)$ were first established by Biagioli \cite{tony}. M.~ Koike \cite{koike} observed that for certain values of $r$ and $s$  the function $U(r,s)$  could be written  in terms of Thompson series. His observations were later proved by K. Bringmann and H. Swisher \cite{ppair} by using the theory of modular forms. However, it was not realized that this function occurs naturally as a Hecke action on eta products, as given by the following theorem.

\begin{theorem}\label{thr3}
Let $r$ and $s$ be two positive integers with $r+s \equiv 0 \pmod {24}$.

If $r+s \equiv 0 \pmod 5$, then
 \begin{equation}\label{thr3p1}
 T_5(\eta(r\tau)\eta(s\tau))=\eta(r\tau)\eta(s\tau)\left(q^{-(r+s)/60}U(r,s)\right)^2,
\end{equation}
and if $r-s \equiv 0 \pmod 5$, then
\begin{equation}\label{thr3p2}
T_5(\eta(r\tau)\eta(s\tau))=-\eta(r\tau)\eta(s\tau)\left(q^{(11r-s)/60}U(r,s)\right)^2,
\end{equation}
where in either case $\eta(\tau):=\sum_{n=-\infty}^\infty (-1)^n q^{(6n-1)^2/24}$ with $ q = \exp(2\pi i\tau) $ and  $ {\textrm{Im} } \, \tau > 0$.
\end{theorem}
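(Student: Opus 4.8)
The plan is to turn both sides into explicit $q$-series, equivalently into theta series of the binary quadratic form $ru^2+sv^2$, and then match them. The structural input is Euler's pentagonal number theorem, which is built into the very definition $\eta(\tau)=\sum_n(-1)^nq^{(6n-1)^2/24}$ given in the statement: multiplying two such series gives
\begin{equation*}
\eta(r\tau)\eta(s\tau)=\sum_{j,k\in\mathbf{Z}}(-1)^{j+k}\,q^{(r(6j-1)^2+s(6k-1)^2)/24}.
\end{equation*}
Since $6j-1$ is coprime to $6$ one has $(6j-1)^2\equiv1\pmod{24}$, so each exponent is integral once $r+s\equiv0\pmod{24}$. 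Thus $f:=\eta(r\tau)\eta(s\tau)=\sum_n a(n)q^n$ is a weight-one form whose coefficient $a(n)$ is the signed number of representations $24n=ru^2+sv^2$ with $u\equiv v\equiv-1\pmod 6$.

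Next I would record the action of the fifth Hecke operator in weight one: for the relevant level $N$ (dividing a fixed multiple of $rs$) and nebentypus $\chi$ one has $(T_5f)(n)=a(5n)+\chi(5)\,a(n/5)$, with $a(n/5)=0$ when $5\nmid n$ and $\chi(5)=0$ when $5\mid N$, so that $T_5$ degenerates to $U_5$ in the latter case. Feeding in the theta expansion, the leading term of $T_5f$ becomes $\sum_{(u,v)}(-1)^{j+k}q^{(ru^2+sv^2)/120}$, the sum being over $(u,v)$ with $u\equiv v\equiv-1\pmod6$ subject to $5\mid ru^2+sv^2$. Here the hypothesis is decisive: when $r+s\equiv0\pmod5$ the form reduces to $r(u-v)(u+v)\pmod5$, so it splits and the congruence forces $u\equiv\pm v\pmod5$. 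I would split the double sum along these two families, reindex $u,v$ by residues modulo $30$, and reorganize the result into the sum of products of one-variable theta series in $q^r$ and $q^s$ that, via the Rogers--Ramanujan identities, will be recognizable as a multiple of $U(r,s)^2$.

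In parallel I would expand the right-hand side. By \eqref{rridents} each of $G(q^r),H(q^r),G(q^s),H(q^s)$ is, after the standard normalizing power of $q$, a theta quotient via the Jacobi triple product; hence $q^{-(r+s)/60}U(r,s)$ is a weight-zero modular function, its square is again weight zero, and $\eta(r\tau)\eta(s\tau)\bigl(q^{-(r+s)/60}U(r,s)\bigr)^2$ has an explicit product/theta expansion. Expanding $U(r,s)^2=G(q^r)^2G(q^s)^2+2q^{(r+s)/5}G(q^r)G(q^s)H(q^r)H(q^s)+q^{2(r+s)/5}H(q^r)^2H(q^s)^2$ and multiplying by the product formula for $f$ produces the series to be compared with the factored $T_5f$ above. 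The second case $r-s\equiv0\pmod5$ runs identically: now $ru^2+sv^2\equiv r(u^2+v^2)\pmod5$, which splits because $-1\equiv2^2\pmod5$, forcing $u\equiv\pm2v\pmod5$; tracking the sign character attached to this splitting is what produces the overall minus sign and the shifted power $q^{(11r-s)/60}$ in \eqref{thr3p2}.

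The main obstacle is the final identification of the two expansions. I expect to handle it in one of two ways. The direct route combines the quintuple product identity with the $5$-dissections of the products in \eqref{rridents} to transform the factored Hecke sum termwise into the three-term expansion of $U(r,s)^2$; this is the most informative but most delicate, since it requires the careful bookkeeping of residue classes modulo $30$ and the correct determination of the sign character at $5$. The more robust route is to observe that both $T_5f$ and the right-hand side of \eqref{thr3p1} (respectively \eqref{thr3p2}) are holomorphic modular forms of weight one on a common congruence subgroup $\Gamma_1(N')$ with $N'$ an explicit multiple of $120\,rs$, so that equality follows once finitely many Fourier coefficients agree, the number being bounded effectively by a Sturm bound. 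The residue-class bookkeeping and the sign determination are the steps most likely to require care.
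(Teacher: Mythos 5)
Your preparatory steps are sound and in fact reproduce the mechanism that drives the paper's own proof: the integrality of the exponents, the reading of the coefficients of $\eta(r\tau)\eta(s\tau)$ as signed representation numbers of $ru^2+sv^2$ with $u\equiv v\equiv -1 \pmod 6$, and the residue analysis ($u\equiv\pm v \pmod 5$ when $r+s\equiv 0\pmod 5$, $u\equiv\pm 2v\pmod 5$ when $r\equiv s\pmod 5$) are all correct. The problems lie in the two ways you propose to finish. The ``robust route'' via a Sturm bound cannot prove this theorem at all: the statement quantifies over the infinite family of pairs $(r,s)$, your level $N'$ grows with $rs$, and hence so does the number of coefficients to be checked; a Sturm-bound argument verifies any single instance but never the theorem. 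That fallback must be discarded, which puts all the weight on your direct route.

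There, the step you defer --- ``reorganize the result \dots recognizable as a multiple of $U(r,s)^2$'' --- is not bookkeeping; it is the entire content of the theorem, and it requires two specific identities your plan never names. The paper works multiplicatively rather than with your double sum: it takes the $5$-dissection of Euler's product in the form \eqref{b1}--\eqref{b2} (Entry 38(iv) in Berndt's book; this is indeed what the quintuple product yields, so you are pointing at the right tool), writes it at $q^r$ and at $q^s$, multiplies, and observes that exactly three of the nine resulting terms survive the $a(5n)$ extraction --- this is your residue analysis in multiplicative clothing, uniform in $(r,s)$. The survivors are $G^2(q^r)G^2(q^s)$, $q^{2(r+s)/5}H^2(q^r)H^2(q^s)$, and a ``diagonal'' term coming from the class $u\equiv v\equiv 0\pmod 5$, which after the normalizing power of $q$ is $\eta(5r\tau)\eta(5s\tau)$. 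To see a perfect square you must produce the cross term $2q^{(r+s)/5}G(q^r)G(q^s)H(q^r)H(q^s)$, and this is exactly where \eqref{ghp}, $G(q)H(q)=E(q^5)/E(q)$, enters: by \eqref{ghp} that cross term equals $2\eta(5r\tau)\eta(5s\tau)$ after normalization, one copy being the diagonal term just described and the second copy being precisely the $a(n/5)$ part of $T_5$. Your proposal computes only the leading $a(5n)$ part and is silent on how the $a(n/5)$ part enters the final match; without it, and without \eqref{ghp}, the extracted series is $\eta(r\tau)\eta(s\tau)\left(q^{-(r+s)/60}U(r,s)\right)^2-\eta(5r\tau)\eta(5s\tau)$, which is not a multiple of $U(r,s)^2$, so the identification you hope for cannot go through as stated. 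Supply \eqref{b2} and \eqref{ghp}, track the $a(n/5)$ contribution (and the double counting of the class $u\equiv v\equiv 0\pmod 5$, which lies in both of your families), and your direct route becomes, in essence, the paper's proof.
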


Here and later in this manuscript, for any function to which we apply the Hecke action with prime 5 we have the reduction formula
\[T_5\left(\sum_{n=0}^{\i}a(n)q^n\right)=\sum_{n=0}^{\i}\left(a(5n)+a(n/5)\right)q^n,\]
where we employ the convention that $a(k) = 0$ whenever $k$ is not an integer. We will also make use of the notation
\[U_5\left(\sum_{n=0}^{\i}a(n)q^n\right) := \sum_{n=0}^{\i}a(5n)q^n.\]

During our investigations, we also observed the following two theorems, where $\t(q) := \sum_{n=-\i}^{\i}q^{n^2}$.

\begin{theorem}\label{thr1}
If $r+s\equiv 0 \pmod 5$, then the following two identities hold:
\begin{align}
2\t(q^r)\t(q^s)+T_5(\t(q^r)\t(q^s)) &= 4E(q^{2r})E(q^{2s})U(r,s)U(4r,4s)\label{he1}\\
2\t(q^r)\t(q^s)-T_5(\t(q^r)\t(q^s)) &= 4q^rE(q^{2r})E(q^{2s})U(r,4s)U(4r,s).\label{he2}
\end{align}
\end{theorem}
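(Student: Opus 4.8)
The plan is to replace the pair \eqref{he1}--\eqref{he2} by its sum and difference, which separates a Hecke-free identity from the genuinely Hecke-theoretic one. Adding the two relations and halving yields
\[
\t(q^r)\t(q^s)=E(q^{2r})E(q^{2s})\bigl(U(r,s)U(4r,4s)+q^rU(r,4s)U(4r,s)\bigr),
\]
call this (A), while subtracting them yields
\[
T_5\bigl(\t(q^r)\t(q^s)\bigr)=2E(q^{2r})E(q^{2s})\bigl(U(r,s)U(4r,4s)-q^rU(r,4s)U(4r,s)\bigr),
\]
call this (B); here $E(q)=(q;q)_\i$. The hypothesis $r+s\equiv 0\pmod 5$ is exactly what makes all four factors meaningful: $U(r,s)$ and $U(4r,4s)$ are instances of the sum-case of the definition, whereas $4s-r\equiv 0$ and $s-4r\equiv 0\pmod 5$ show that $U(r,4s)$ and $U(4r,s)$ are instances of the difference-case. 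It therefore suffices to prove (A) and (B) separately.

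For (A) I would first divide by $E(q^{2r})E(q^{2s})$ and use the product form $\t(q)=(q^2;q^2)_\i(-q;q^2)_\i^2$, which gives $\t(q^r)/E(q^{2r})=(-q^r;q^{2r})_\i^2$ and reduces (A) to the purely product-theoretic identity
\[
(-q^r;q^{2r})_\i^2(-q^s;q^{2s})_\i^2=U(r,s)U(4r,4s)+q^rU(r,4s)U(4r,s).
\]
The right-hand side is then expanded by the Rogers--Ramanujan identities \eqref{rridents}, together with $G(q)H(q)=(q^5;q^5)_\i/(q;q)_\i$, so that each $U$-product becomes a sum of ratios of infinite products. Converting these to Ramanujan theta functions via the Jacobi triple and quintuple product identities and applying a Schr\"oter-type product formula (which rewrites a product of two theta functions as a sum of such products) should match the two sides; the congruence $r+s\equiv 0\pmod 5$ fixes the parameters of the product formula.

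For (B) I would write $T_5=U_5+V_5$, where $V_5\bigl(\t(q^r)\t(q^s)\bigr)=\t(q^{5r})\t(q^{5s})$ can itself be expanded by (A) applied at the arguments $(5r,5s)$. The operator $U_5$ selects the terms of $\t(q^r)\t(q^s)=\sum_{m,n\in\ZBbb}q^{rm^2+sn^2}$ whose exponent is divisible by $5$; since $r+s\equiv 0\pmod 5$ gives $rm^2+sn^2\equiv r(m-n)(m+n)\pmod 5$, these are exactly the lattice points with $m\equiv\pm n\pmod 5$. Re-parametrizing this sublattice produces theta functions whose arguments are the rescaled forms associated with $(4r,4s)$ and the mixed pairs $(r,4s),(4r,s)$; matching the resulting expression, again via the Rogers--Ramanujan identities and the theta product formula, against the right-hand side of (B) completes the proof.

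The main obstacle is the bookkeeping in (B): controlling the $U_5$-dissection and, in particular, pinning down the sign and the prefactor $q^r$ that distinguish \eqref{he1} from \eqref{he2}, as these emerge only after the re-parametrization and the invocation of the product formula. As an independent check, both sides of \eqref{he1} and \eqref{he2} are holomorphic modular forms of weight $1$ on $\Gamma_0(N)$ for a suitable level $N$ depending on $r,s$ with a fixed character: $\t(q^r)\t(q^s)$ has weight $1$ and $T_5$ is the corresponding weight-$1$ Hecke operator, while each $U(\cdot,\cdot)$ is a modular function and $E(q^{2r})E(q^{2s})$ supplies the remaining weight. Matching Fourier coefficients up to the Sturm bound for this space reduces either identity to a finite verification.
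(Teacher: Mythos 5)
Your reorganization of the theorem is sound, and in fact your identity (A) is precisely the identity \eqref{a7} that the paper itself isolates: the paper proves \eqref{he1} directly and then obtains \eqref{he2} from \eqref{he1} together with \eqref{a7}, which is equivalent to your sum/difference scheme. The congruence bookkeeping (that $U(r,4s)$ and $U(4r,s)$ fall in the difference case) and the observation that $U_5$ picks out the sublattice $m\equiv\pm n\pmod 5$ are also correct; the latter is the same dissection the paper performs in factored, one-variable form via $\t(q)=\t(q^{25})+2qf(q^{15},q^{35})+2q^4f(q^{5},q^{45})$, i.e.\ \eqref{a1}.

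However, the proposal stops exactly where the real work begins, so there is a genuine gap: neither (A) nor (B) is actually proved. For (A), the step ``apply a Schr\"oter-type product formula \dots should match the two sides'' conceals the essential ingredient, namely Ramanujan's Entries 2 and 3 (equations \eqref{2} and \eqref{3} of the paper): $G(q)G(q^4)+qH(q)H(q^4)=\t(q)/E(q^2)$ and $G(q)G(q^4)-qH(q)H(q^4)=\t(q^5)/E(q^2)$. Granting \eqref{2}, identity (A) follows by pure algebra---expand $U(r,s)U(4r,4s)+q^rU(r,4s)U(4r,s)$ and the cross terms cancel identically, leaving $\bigl(G(q^r)G(q^{4r})+q^rH(q^r)H(q^{4r})\bigr)\bigl(G(q^s)G(q^{4s})+q^sH(q^s)H(q^{4s})\bigr)$, which is what the paper's determinant argument encodes---but without \eqref{2} the ``matching'' you defer is tantamount to proving one of Ramanujan's forty identities, not a routine product manipulation. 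Likewise for (B): after re-parametrizing the sublattices $m\equiv\pm n\pmod 5$ (note they overlap at $m\equiv n\equiv 0\pmod 5$, so inclusion--exclusion is required) you obtain theta series of binary quadratic forms in two summation indices, and identifying those with $U$-products is precisely the step left open; the paper avoids this by dissecting each $\t$ separately and using the evaluations $f(q^3,q^7)=E(q^2)H(q)G(q^4)$ and $f(q,q^9)=E(q^2)G(q)H(q^4)$ from \eqref{a2}, again combined with \eqref{2} and \eqref{3}. Finally, the Sturm-bound argument you offer as a fallback verifies the identity only for each fixed pair $(r,s)$---the level grows with $r$ and $s$---so it cannot replace a uniform proof of the theorem as stated.
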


\begin{theorem}\label{thr2}
If $r-s\equiv 0 \pmod 5$, then the following two identities hold:
\begin{align*}
2\t(q^r)\t(q^s)+T_5(\t(q^r)\t(q^s)) &= 4E(q^{2r})E(q^{2s})U(r,4s)U(s,4r),\\
2\t(q^r)\t(q^s)-T_5(\t(q^r)\t(q^s)) &= 4q^rE(q^{2r})E(q^{2s})U(r,s)U(4r,4s).
\end{align*}
\end{theorem}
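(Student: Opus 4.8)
The plan is to prove the two identities simultaneously by passing to their sum and difference. Put $F:=\t(q^r)\t(q^s)$. Adding the two asserted identities and dividing by $4$ reduces the theorem to the product identity
\begin{equation*}
F=E(q^{2r})E(q^{2s})\bigl(U(r,4s)U(s,4r)+q^rU(r,s)U(4r,4s)\bigr),
\end{equation*}
while subtracting them and dividing by $2$ reduces it to the Hecke identity
\begin{equation*}
T_5(F)=2E(q^{2r})E(q^{2s})\bigl(U(r,4s)U(s,4r)-q^rU(r,s)U(4r,4s)\bigr).
\end{equation*}
Establishing this pair is the $r-s\equiv0\pmod 5$ analogue of the argument just given for Theorem~\ref{thr1}; the essential new feature is that the hypothesis now selects the second branch (the one carrying the minus sign) in the definition of $U$ for the factors $U(r,s)$ and $U(4r,4s)$, so the minus sign and the fractional powers $q^{(s-r)/5}$ and $q^{4(s-r)/5}$ must be tracked through every step. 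Using the symmetry $U(s,r)=-q^{(r-s)/5}U(r,s)$ one checks that both right-hand sides are invariant under $r\leftrightarrow s$, so I may assume $r<s$, which makes each $U$ that occurs an honest power series.

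The product identity is itself a Rogers--Ramanujan-type identity, of the same species as those in Ramanujan's list of forty: a combination of products of $G$'s and $H$'s equated, after the Euler factors $E(q^{2r})$ and $E(q^{2s})$ are supplied, to the theta product $\t(q^r)\t(q^s)$. I would prove it by expanding $U(r,4s)U(s,4r)$ and $U(r,s)U(4r,4s)$ through the product forms \eqref{rridents}; the factors $E(q^{2r})E(q^{2s})$ are expected to combine with these Rogers--Ramanujan products into genuine denominator-free theta products. These can then be matched against the left-hand side by $5$-dissecting each factor, $\t(q)=\t(q^{25})+2\sum_{n}q^{(5n-1)^2}+2\sum_{n}q^{(5n-2)^2}$, and applying the Jacobi triple product. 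The hypothesis $r\equiv s\pmod 5$, which gives $rm^2+sn^2\equiv r(m^2+n^2)\pmod 5$, is what controls the residues of the exponents, sorts the terms into the two groups, and pins down the extra factor $q^r$ on the second group.

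For the Hecke identity I would use the reduction formula in the form $T_5(F)=U_5(F)+\t(q^{5r})\t(q^{5s})$, the second summand being the contribution of the terms $a(n/5)$. The same dissection shows that the part of $F$ supported on exponents divisible by $5$ consists of the diagonal term $\t(q^{25r})\t(q^{25s})$ together with the two mixed terms coming from $(m,n)\equiv(\pm1,\pm2)$ and $(\pm2,\pm1)\pmod 5$. Applying $U_5$ contracts the diagonal term to $\t(q^{5r})\t(q^{5s})$, so that $T_5(F)=2\t(q^{5r})\t(q^{5s})+U_5(\text{mixed})$; re-expressing the contracted mixed terms through \eqref{rridents} then identifies $T_5(F)$ with the claimed combination $2E(q^{2r})E(q^{2s})\bigl(U(r,4s)U(s,4r)-q^rU(r,s)U(4r,4s)\bigr)$ by the same bookkeeping as in the product identity. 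Throughout, the computation runs parallel to the proof of Theorem~\ref{thr1}, with the second branch of $U$ in place of the first.

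The main obstacle is the factorization step itself: proving that the expanded products of Rogers--Ramanujan functions, once the Euler factors are attached, collapse to exactly the right theta products with the correct signs and powers of $q$. This is precisely where the second branch of $U$ enters, and where a stray sign or a misplaced $q^{(s-r)/5}$ would derail everything; the safest route is to reduce every two-variable expression to one-variable Rogers--Ramanujan and quintuple-product evaluations and then to match the low-order $q$-coefficients as a consistency check. As an independent safeguard one may note that both sides are holomorphic modular forms of weight $1$ on a congruence subgroup determined by $r$ and $s$, so each identity can in principle be confirmed by comparing $q$-expansions up to a Sturm bound; the dissection argument is nevertheless preferable because it is uniform in $r$ and $s$.
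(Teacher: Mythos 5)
Your outline reproduces the paper's strategy faithfully: the paper proves the $r+s\equiv 0\pmod 5$ case (Theorem \ref{thr1}) by exactly your route (the $5$-dissection $\varphi(q)=\varphi(q^{25})+2qf(q^{15},q^{35})+2q^{4}f(q^{5},q^{45})$, the splitting $T_5(F)=U_5(F)+\varphi(q^{5r})\varphi(q^{5s})$, and a repackaging of the two asserted identities into a ``product'' identity plus a dissection identity), and then declares the present case identical. But there is a genuine gap at precisely the step you flag as the main obstacle. Your plan for collapsing the bilinear combinations of Rogers--Ramanujan functions into theta products --- expand via \eqref{rridents}, manipulate with the Jacobi triple product, check low-order coefficients --- cannot work term by term: an individual product such as $E(q^{2r})E(q^{2s})G(q^r)G(q^s)G(q^{4r})G(q^{4s})$ is \emph{not} a theta product, and only the specific combinations collapse. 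What makes them collapse is a pair of identities your proposal never names, Ramanujan's Entries 2 and 3 (equations \eqref{2} and \eqref{3} of the paper),
\[
G(q)G(q^4)+qH(q)H(q^4)=\df{\varphi(q)}{E(q^2)},\qquad G(q)G(q^4)-qH(q)H(q^4)=\df{\varphi(q^5)}{E(q^2)},
\]
together with the factorizations $f(q^3,q^7)=E(q^2)H(q)G(q^4)$ and $f(q,q^9)=E(q^2)G(q)H(q^4)$ from \eqref{a2}, which convert the dissection pieces into Rogers--Ramanujan functions. Concretely, when $r\equiv s\pmod 5$ the cross terms surviving $U_5$ are $4q^{(r+4s)/5}A(q^r)B(q^s)+4q^{(4r+s)/5}B(q^r)A(q^s)$ with $A=f(q^3,q^7)$, $B=f(q,q^9)$; expanding $U(r,4s)U(s,4r)-q^{r}U(r,s)U(4r,4s)$ produces these cross terms \emph{plus} the product $\bigl(G(q^r)G(q^{4r})-q^rH(q^r)H(q^{4r})\bigr)\bigl(G(q^s)G(q^{4s})-q^sH(q^s)H(q^{4s})\bigr)$, and identifying that product with $\varphi(q^{5r})\varphi(q^{5s})/\bigl(E(q^{2r})E(q^{2s})\bigr)$ is Entry 3, not bookkeeping. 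Likewise your ``sum'' identity is Entry 2 applied at $q^r$ and $q^s$ and expanded --- this is the content of the paper's determinant argument for \eqref{a7}. Entries 2 and 3 are themselves two of Ramanujan's forty identities; nothing in your proposed manipulations produces them.

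Your two fallbacks do not close this gap: matching low-order $q$-coefficients is, as you concede, a consistency check rather than a proof, and the Sturm-bound alternative is only gestured at (no weight, level, character, or bound is pinned down, and the modularity of each side would itself need an argument). So the proposal is a correct map of the terrain with the decisive lemma missing; once you import \eqref{2}, \eqref{3}, and \eqref{a2}, your dissection scheme does complete to a proof, and it is then the same proof as the paper's.
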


Let $(a,b,c)$ denote the positive definite quadratic form $an^2+bnm+cn^2$ with discriminant $D=b^2-4ac<0$. For simplicity, we will not distinguish the quadratic form $(a,b,c)$ and its  generating function  $\sum_{n,m=-\i}^{\i}q^{an^2+bnm+cm^2}$.
It is well known that a Hecke action on a binary quadratic form of a given discriminant can be written as a linear combination of the quadratic forms of this discriminant \cite[p.~794]{Hecke}. In applications of Theorem \ref{thr3}, we first express the eta product as a linear combination of the relevant quadratic forms. In this format, Theorems \ref{thr1}--\ref{thr2} are easier to apply. These theorems enable us to find new identities for the Rogers--Ramanujan functions and also to use such identities in return to find identities involving binary quadratic forms.  Among the many such results presented in this paper we give two examples (see \eqref{tkmm4} and \eqref{ap5}): letting $\k(q):=(-q;q^2)_\i$, they are

\begin{equation*}
2qU(1,71,q^2)=-2q^3+\k(q)\k(q^{71})-\k(-q)\k(-q^{71})-2q^9\df{1}{\k(-q^2)\k(-q^{142})}
\end{equation*}
and
\begin{equation*}
\df{(1,1,10)+(2,1,5)-(1,0,39)-(5,2,8)}{(3,0,13)+(2,1,5)-(3,3,4)-(5,2,8)}=\df{\t(-q^6)\t(-q^{26})}{\t(-q^2)\t(-q^{78})}.
\end{equation*}

We proceed by collecting the necessary definitions and formulas in the next section. In Section \ref{tHr}, we give proofs of Theorems \ref{thr3}--\ref{thr2}. In Section \ref{appl}, we present several applications.  We conclude in the last section with a brief description of the prospects for future work.

\section{Definitions and Preliminary Results}

We first recall  Ramanujan's definitions for a general theta
function and some of its important special cases.  Set
\begin{equation}\label{generaltheta}
f(a,b) := \sum_{n=-\i}^{\i}a^{n(n+1)/2}b^{n(n-1)/2}, \qquad |ab| <
1.
\end{equation}
The function $ f(a,b) $ satisfies the
well-known Jacobi triple product identity \cite[p.~35, Entry
19]{III}
\begin{equation}\label{19III}
f(a,b) = (-a;ab)_{\i}(-b;ab)_{\i}(ab;ab)_{\i}.
\end{equation}
The three most important special cases of \eqref{generaltheta} are
\begin{equation}\label{22i}
\varphi(q) := f(q,q) = \sum_{n=-\i}^{\i}q^{n^2} =
(-q;q^2)_{\i}^2(q^2;q^2)_{\i},
\end{equation}
\begin{equation*}
\psi(q) := f(q,q^3) = \sum_{n=0}^{\i}q^{n(n+1)/2} =
\df{(q^2;q^2)_{\i}}{(q;q^2)_{\i}},
\end{equation*}
and
\begin{equation}\label{22iii}
E(q) := f(-q,-q^2) = \sum_{n=-\infty}^\infty (-1)^n q^{n(3n-1)/2}
= (q;q)_\infty = q^{-1/24}\eta(\tau).
\end{equation}
The product
representations in \eqref{22i}--\eqref{22iii} are special cases of
\eqref{19III}.
The function $ f(a,b) $ also
satisfies a useful addition formula.  For each  integer $
n$, let
\begin{equation*}
U_n := a^{n(n+1)/2}b^{n(n-1)/2} \qquad \text{and} \qquad V_n :=
a^{n(n-1)/2}b^{n(n+1)/2}.
\end{equation*}
Then \cite[p.~48, Entry 31]{III}
\begin{equation}\label{31III}
f(U_1,V_1) =
\sum_{r=0}^{n-1}U_rf\left(\df{U_{n+r}}{U_r},\df{V_{n-r}}{U_r}\right).
\end{equation}
With $a=b=q$ and $n=2$, we find from \eqref{31III} that
\begin{equation}\label{tdis}
\t(q)=\t(q^4)+2q\psi(q^8).
\end{equation}
Similarly, with $a=q$, $b=q^3$, and $n=2$, we find that
\begin{equation}\label{pdis}
\psi(q)=f(q^6,q^{10})+qf(q^2,q^{14}).
\end{equation}
By \eqref{rridents} and \eqref{19III}, we see that
\begin{equation}\label{ghd}
G(q)=\df{f(-q^2,-q^3)}{E(q)} \quad \text{and} \quad H(q)=\df{f(-q,-q^4)}{E(q)}.
\end{equation}
A useful consequence of \eqref{ghd} in conjunction
with the Jacobi triple product identity \eqref{19III} is
\begin{equation}\label{ghp}
G(q)H(q) = \df{E(q^5)}{E(q)}.
\end{equation}

The odd-even dissections of $G$ and $H$ were given by Watson
\cite{watson}:

\begin{equation}\label{gh2dis}
\begin{split}
G(q)&=\df{E(q^8)}{E(q^2)}\bigr(G(q^{16})+qH(-q^4)\bigl),\\
H(q)&=\df{E(q^8)}{E(q^2)}\bigr(q^3H(q^{16})+G(-q^4)\bigl).
\end{split}
\end{equation}

Recall that  the general theta
function $f$ is defined by \eqref{generaltheta}. For convenience, we  also define

\begin{equation*}
 f_k(a,b) := \begin{cases}
         f(a,b)   & \text{if } k \equiv 0 \pmod 2,\\
         f(-a,-b) & \text{if } k \equiv 1 \pmod 2.
 \end{cases}
\end{equation*}

Let $m$ be an integer and let $\a$, $\b$, $p$, and $\l$ be positive
integers such that
\begin{equation*}
\a m^2+\b=p\l.
\end{equation*}
Let $\d$ and $\e$ be integers, and let $t$ and $l$ be reals. With the parameters defined this way, we set
\begin{equation}\label{RGdef}
\begin{split}
R(\e,\d,l,t,\a,\b,m,p,\l) :=
  \sum_{\substack{k=0\\n:=2k+t}}^{p-1}\Bigl(&(-1)^{\e k}q^{\{\l n^2+p\a l^2+2\a nml\}/8}\\
                                           &\times f_\d(q^{(1+l)p\a/2+\a nm/2},q^{(1-l)p\a/2-\a nm/2})\\
                                           &\times f_{\e p/2+m\d/2}(q^{p\b/2+\b n/2},q^{p\b/2 - \b n/2})\Bigr).
\end{split}
\end{equation}
Then, by \cite[Th.~1]{mjnt} with $x=y=1$ and $q$ replaced by $q^{1/2}$, we have
\begin{equation}\label{st3}
 R(\e,\d,l,t,\a,\b,m,p,\l)=\sum_{u,v=-\i}^{\i}(-1)^{\d v+ \e
 u}q^{(\l U^2 +2\a m UV +p\a V^2)/8},
\end{equation}
where $U:=2u+t$ and  $V:=2v+l$.
From this representation it follows that \cite[Cor.~3.2]{mjnt}
\begin{equation}\label{prtt1}
R(\e,\d,l,t,\a,\b,m,p,\l)=R(\d,\e,t,l,1,\a \b, \a m, \l,p\a).
\end{equation}
Moreover, we have the following lemma.
\begin{lemma}\cite[Cor.~3.3]{mjnt}
Let $\a_1$, $\b_1$, $m_1$, $p_1$ be another set of parameters such
that $\a_1m_1^2+\b_1=p_1\l$, $\a\b=\a_1\b_1$, and $\l \divides (\a
m-\a_1 m_1)$. Set
\begin{equation*}
 a:=\df{\a m-\a_1 m_1}{\l}.
\end{equation*}
Then,
\begin{equation}\label{prt1}
R(\e,\d,l,t,\a,\b,m,p,\l)=R(\e,\d+a\e,l,t+al,\a_1,\b_1,m_1,p_1,\l).
\end{equation}
\end{lemma}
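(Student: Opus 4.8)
The plan is to bypass the theta-product definition \eqref{RGdef} and work instead with the bilateral double-sum representation \eqref{st3}. Writing both sides of \eqref{prt1} in that form reduces the claim to an identity between two lattice sums over $\ZBbb^2$, after which the whole statement should follow from a single unimodular change of summation variables together with the arithmetic constraints relating the two parameter sets. Concretely, by \eqref{st3} the left-hand side is $\sum_{u,v}(-1)^{\d v+\e u}q^{(\l U^2+2\a m UV+p\a V^2)/8}$ with $U=2u+t$ and $V=2v+l$, while the right-hand side is the analogous sum in primed variables $U'=2u'+(t+al)$, $V'=2v'+l$, with $\d$ replaced by $\d+a\e$ and $(\a,m,p)$ replaced by $(\a_1,m_1,p_1)$.

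First I would introduce the substitution $V=V'$, $U=U'-aV'$, that is, $v=v'$ and $u=u'-av'$. Since $a=(\a m-\a_1m_1)/\l$ is an \emph{integer} by the hypothesis $\l\divides(\a m-\a_1m_1)$, this map is a bijection of $\ZBbb^2$ onto itself, so it merely reindexes the sum. One checks directly that it is compatible with the prescribed shifts: $U=U'-aV'=2(u'-av')+t=2u+t$ and $V=V'=2v+l$ recover the unprimed parametrization exactly.

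Next I would verify that this substitution carries the quadratic form $\l U^2+2\a m UV+p\a V^2$ onto $\l U'^2+2\a_1m_1U'V'+p_1\a_1V'^2$. Expanding with $U=U'-aV'$, $V=V'$, the coefficient of $U'^2$ is visibly $\l$, and the coefficient of $U'V'$ is $2\a m-2a\l=2\a_1m_1$ by the definition of $a$. The coefficient of $V'^2$ is $\l a^2-2a\a m+p\a$, and showing this equals $p_1\a_1$ is the one genuine computation. I expect to use the identity $p\a-p_1\a_1=(\a m-\a_1m_1)(\a m+\a_1m_1)/\l=a(\a m+\a_1m_1)$, which itself follows from $\a\b=\a_1\b_1$ together with $\a m^2+\b=p\l$ and $\a_1m_1^2+\b_1=p_1\l$. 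Substituting this and then replacing $\a m-\a_1m_1$ by $a\l$, the expression $\l a^2-2a\a m+(p\a-p_1\a_1)$ should collapse to $\l a^2-a^2\l=0$, confirming the match.

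Finally I would track the sign factor. Under $u=u'-av'$, $v=v'$ the factor $(-1)^{\d v+\e u}$ becomes $(-1)^{(\d-a\e)v'+\e u'}$, whereas the right-hand side of \eqref{prt1} carries $(-1)^{(\d+a\e)v'+\e u'}$; these agree because the two exponents differ by $2a\e v'$, which is even. With the quadratic form and the sign both matched term by term, the two sums coincide and \eqref{prt1} follows. The only real obstacle is the $V'^2$-coefficient identity, where all four hypotheses enter; everything else is bookkeeping, and the divisibility hypothesis $\l\divides(\a m-\a_1m_1)$ is precisely what makes the reindexing legitimate.
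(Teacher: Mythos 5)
Your proof is correct: the reindexing $u=u'-av'$, $v=v'$ is unimodular precisely because $a\in\ZBbb$ (the divisibility hypothesis), your coefficient computation checks out (the $V'^2$ coefficient $\l a^2-2a\a m+p\a$ collapses to $p_1\a_1$ via $p\a-p_1\a_1=a(\a m+\a_1m_1)$, which uses $\a\b=\a_1\b_1$ and both relations $\a m^2+\b=p\l$, $\a_1m_1^2+\b_1=p_1\l$), and the sign mismatch $2a\e v'$ is indeed even; absolute convergence of the positive definite lattice sum justifies the rearrangement. Note that the paper itself offers no proof of this lemma---it is imported from \cite{mjnt}---but your argument via the bilateral representation \eqref{st3} is exactly the derivation intended by that citation, so your route coincides with the source's approach.
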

From \eqref{st3} we then conclude that
\begin{equation}\label{qrr}
(a,b,c)=R(0,0,0,0,1,-D,b,2c,2a).
\end{equation}

\section{Hecke-Type Relations}\label{tHr}
In this section we presents proofs of Theorems \ref{thr3}--\ref{thr2}.

\subsection*{Proof of Theorem \ref{thr3}}
 The proofs of \eqref{thr3p1} and \eqref{thr3p2} are essentially same, so we will only prove \eqref{thr3p1}. For simplicity we set $Q:=q^5$.
We start with the well-known 5-dissection of $E(q)$ as found in \cite[p.~81, Entry~38(iv)]{III}:
\begin{equation}\label{b1}
E(q)E(Q)=f^2(-Q^2,-Q^3)-q^2f^2(-Q,-Q^4)-qE(Q)E(Q^5).
\end{equation}
Using \eqref{ghd}, we can write \eqref{b1} in its equivalent form
\begin{equation}\label{b2}
\df{E(q)}{E(Q)}=G^2(Q)-q^2H^2(Q)-q\df{E(Q^5)}{E(Q)}.
\end{equation}
From \eqref{b2}, we find that
\begin{equation*}
\df{E(q^r)E(q^s)}{E(Q^{r})E(Q^{s})}=\left(G^2(Q^r)-q^{2r}H^2(Q^r)-q^r\df{E(Q^{5r})}{E(Q^r)}\right)\left(G^2(Q^s)-q^{2s}H^2(Q^s)-q^s\df{E(Q^{5s})}{E(Q^s)}\right),
\end{equation*}
from which we deduce that
\begin{align*}
\df{U_5(E(q^r)E(q^s))}{E(q^r)E(q^s)}
&=G^2(q^r)G^2(q^s)+q^{2(r+s)/5}H^2(q^r)H^2(q^s)+q^{(r+s)/5}\df{E(Q^{r})E(Q^s)}{E(q^r)E(q^s)}\\
&=U(r,s)^2-2q^{(r+s)/5}G(q^r)G(q^s)H(q^r)H(q^s)+q^{(r+s)/5}\df{E(Q^{r})E(Q^s)}{E(q^r)E(q^s)}\\
&=U(r,s)^2-q^{(r+s)/5}\df{E(Q^{r})E(Q^s)}{E(q^r)E(q^s)},
\end{align*}
where in the last step we used \eqref{ghp}. Therefore, we have that
\begin{equation}\label{b5}
U_5(E(q^r)E(q^s))+q^{(r+s)/5}E(Q^{r})E(Q^s)=E(q^r)E(q^s)U(r,s)^2.
\end{equation}
Finally, we multiply both sides of \eqref{b5} by $q^{(r+s)/120}$ and use the fact that $\eta(\tau)=q^{1/24}E(q)$ to arrive at
\begin{equation*}
U_5(\eta(r\tau)\eta(s\tau))+\eta(5r\tau)\eta(5s\tau)=\eta(r\tau)\eta(s\tau)\left(q^{-(r+s)/60}U(r,s)\right)^2,
\end{equation*}
which is clearly equivalent to \eqref{thr3p1}.
\qed

\subsection*{Proof of Theorems \ref{thr1} and \ref{thr2}}
The proofs of the Theorems \ref{thr1} and \ref{thr2} are identical, so we will only prove Theorem \ref{thr1}. For convenience, we again set $Q:=q^5$.
By \eqref{31III}, with $a=b=q$ and $n=5$, and by \eqref{22i}, we get
\begin{equation}\label{a1}
\t(q)=\t(Q^5)+2qf(Q^3,Q^7)+2q^4f(Q,Q^9).
\end{equation}
From \eqref{19III}, with simple product manipulations we find that
\begin{equation}
A(q):=f(q^3,q^7)=E(q^2)H(q)G(q^4) \quad \text{and} \quad B(q):=f(q,q^9)=E(q^2)G(q)H(q^4).\label{a2}
\end{equation}
We have from \eqref{a1} and \eqref{a2} that
\begin{equation}
\t(q^r)\t(q^s)=\left(\t(Q^{5r})+2qA(Q^r)+2q^4B(Q^r)\right)\left(\t(Q^{5s})+2qA(Q^s)+2q^4B(Q^s)\right).\label{a3}
\end{equation}
From \eqref{a3}, together with the fact that $r+s\equiv 0 \pmod 5$, we conclude that
\begin{equation}
U_5\left(\t(q^r)\t(q^s)\right)=\t(Q^r)\t(Q^s)+4q^{(r+s)/5}A(q^r)A(q^s)+4q^{4(r+s)/5}B(q^r)B(q^s).\label{a4}
\end{equation}

The following two identities of Ramanujan \cite[Entries 2 and 3]{memoir} will be employed in our proofs:
\begin{equation}\label{2}
G(q)G(q^4)+qH(q)H(q^4) = \df{\varphi(q)}{E(q^2)}.
\end{equation}
and
\begin{equation}\label{3}
G(q)G(q^4)-qH(q)H(q^4)  = \df{\varphi(q^5)}{E(q^2)}.
\end{equation}

  From \eqref{a2}, we have
  \begin{equation}\label{a5}
    \begin{split}
	 4q^{(r+s)/5}A(q^r)A(q^s)+4&q^{4(r+s)/5}B(q^r)B(q^s)\\
	 &= 4E(q^{2r})E(q^{2s})\left(q^{(r+s)/5}H(q^r)G(q^{4r})H(q^s)G(q^{4s})\right.\\
	 &\quad \left.+q^{4(r+s)/5}G(q^r)H(q^{4r})G(q^s)H(q^{4s})\right)\\
	 &= 4E(q^{2r})E(q^{2s})\left(G(q^r)G(q^s)+q^{(r+s)/5}H(q^r)H(q^s)\right)\\
	 &\quad \times\left(G(q^{4r})G(q^{4s})+q^{4(r+s)/5}H(q^{4r})H(q^{4s})\right) - 4E(q^{2r})E(q^{2s})\\
	 &\quad \times\left(G(q^r)G(q^s)G(q^{4r})G(q^{4s})+q^{r+s}H(q^r)H(q^s)H(q^{4r})H(q^{4s})\right)\\
	 &= 4E(q^{2r})E(q^{2s})U(r,s)U(4r,4s) - \left(\t(q^r)+\t(Q^r)\right)\left(\t(q^s)+\t(Q^s)\right)\\
	 &\quad -\left(\t(q^r)-\t(Q^r)\right)\left(\t(q^s)-\t(Q^s)\right)\\
	 &= 4E(q^{2r})E(q^{2s})U(r,s)U(4r,4s)-2\left(\t(q^r)\t(q^s)+\t(Q^r)\t(Q^s)\right),
    \end{split}
  \end{equation}
  where in the next to last step we use \eqref{2} and \eqref{3}. We now return to \eqref{a4} and use \eqref{a5} to find that
\begin{equation*}
2\t(q^r)\t(q^s)+U_5\left(\t(q^r)\t(q^s)\right)+\t(Q^r)\t(Q^s)=4E(q^{2r})E(q^{2s})U(r,s)U(4r,4s),
\end{equation*}
which is clearly equivalent to \eqref{he1}.

While we can prove \eqref{he2} exactly the same way we proved \eqref{he1} by simply grouping terms differently in \eqref{a5}, we can also give a direct proof by showing that
\begin{equation}\label{a7}
\t(q^r)\t(q^s)=E(q^{2r})E(q^{2s})\left(U(r,s)U(4r,4s)+q^rU(r,4s)U(4r,s)\right).
\end{equation}
To prove \eqref{a7}, we consider the system of equations
\begin{align*}
U(r,s) &= G(q^r)G(q^{s})+q^{(r+s)/5}H(q^r)H(q^{s}),\\
U(r,4s) &= -q^{(4s-r)/5}G(q^r)H(q^{4s})+H(q^r)G(q^{4s}),\\
\df{\t(q^r)}{E(q^{2r})} &= G(q^{r})G(q^{4r})+q^rH(q^r)H(q^{4r}),
\end{align*}
where the last equation is simply \eqref{2}.
It follows that
\begin{equation*}
\left|\begin{matrix}
U(r,s)                  & G(q^{s})               & q^{(r+s)/5} H(q^s) \\
U(r,4s)                 & -q^{(4s-r)/5}H(q^{4s}) & G(q^{4s}) \\
\df{\t(q^r)}{E(q^{2r})} & G(q^{4r})              & q^rH(q^{4r})
\end{matrix}\right|=0.
\end{equation*}
By expanding this determinant we discover that
  \begin{align*}
    0 &= U(r,s)\left(-q^{(r+s)/5}H(q^{4s})H(q^{4r})-G(q^{4r})G(q^{4s})\right)\\
	 &\quad -U(r,4s)\left(q^rG(q^s)H(q^{4r})-q^{(r+s)/5}H(q^s)H(q^{4r})\right)\\
	 &\quad +\dfrac{\t(q^r)}{E(q^{2r})}\left(G(q^s)G(q^{4s})+q^sH(q^s)H(q^{4s})\right)\\
	 &= -U(r,s)U(4r,4s)-q^rU(r,4s)U(4r,s)+\dfrac{\t(q^r)\t(q^s)}{E(q^{2r})E(q^{2s})},
  \end{align*}
which is \eqref{he2}.
\qed

\section{Applications}\label{appl}
The first set of identities we will prove involves the quadratic forms $(1,1,10)$, $(2,1,5)$, and $(3,3,4)$ of discriminant $-39$ and the quadratic forms $(1,0,39)$, $(3,0,13)$, and $(5,2,8)$ of discriminant $-156$.
From \eqref{prt1}, we observe that $R(0,0,0,0,1,39,1,4,10)=R(0,0,0,0,3,13,-3,4,10)$.  By \eqref{qrr}, we also have $(2,1,5)=(5,1,2)=R(0,0,0,0,1,39,1,4,10)$.
For simplicity we now set $Q:=q^{13}$. From \eqref{RGdef}, we find that
\begin{align*}
R(0,0,0,0,1,39,1,4,10)&=f(q^2,q^2)f(Q^6,Q^6)+2q^5f(q,q^3)f(Q^3,Q^9)+q^{20}f(1,q^4)f(1,Q^{12})\\
&=\t(q^2)\t(Q^6)+2q^5\psi(q)\psi(Q^3)+4q^{20}\psi(q^4)\psi(Q^{12}).
\end{align*}
Similarly,
\begin{align*}
R(0,0,0,0,3,13,-3,4,10)&=f(q^6,q^6)f(Q^2,Q^2)+2q^2f(q^3,q^9)f(Q,Q^3)+q^8f(1,q^{12})f(1,Q^4)\\
&=\t(q^6)\t(Q^2)+2q^2\psi(q^3)\psi(Q)+4q^{8}\psi(q^{12})\psi(Q^{4}).
\end{align*}
Hence, we conclude that
\begin{equation}\label{215}
\begin{split}
(2,1,5) &= \t(q^2)\t(Q^6)+2q^5\psi(q)\psi(Q^3)+4q^{20}\psi(q^4)\psi(Q^{12})\\
&= \t(q^6)\t(Q^2)+2q^2\psi(q^3)\psi(Q)+4q^{8}\psi(q^{12})\psi(Q^{4}).
\end{split}
\end{equation}
By similar considerations one can also obtain
\begin{equation}\label{1110}
(1,1,10)=\t(q)\t(Q^3)+4q^{10}\psi(q^2)\psi(Q^6),
\end{equation}
\begin{equation*}
(3,3,4)=\t(q^3)\t(Q)+4q^4\psi(q^6)\psi(Q^2),
\end{equation*}
and
\begin{equation}\label{528}
\begin{split}
(5,2,8) &= \t(q^{24})\t(Q^8)+2{q}^{8}\psi(q^{12})\psi(Q^4) +4q^{32}\psi(q^{48})\psi(Q^{16})\\
 &\quad +2{q}^{5}f({q}^{6},{q}^{42}) f({Q}^{6},{Q}^{10}) +2{q}^{15}f({q}^{18},{q}^{30}) f({Q}^{2},{Q}^{14}) \\
&= \t(q^{8})\t(Q^{24})+2{q}^{20}\psi(q^{4})\psi(Q^{12}) +4q^{80}\psi(q^{16})\psi(Q^{48})\\
 &\quad +2{q}^{5}f({Q}^{18},{Q}^{30}) f({q}^{6},{q}^{10}) +2{q}^{45}f({Q}^{6},{Q}^{42}) f({q}^{2},{q}^{14}).
\end{split}
\end{equation}

\begin{theorem} The following four facts are true (with $Q:=q^{13}$):
\begin{equation}\label{ap2}
  2q^2E(q^2)E(Q^6)U(1,39)U(1,39,-q) = (1,1,10)+(2,1,5)-(1,0,39)-(5,2,8),
\end{equation}
\begin{equation}\label{ap3}
  2q^2E(q^6)E(Q^2)U(3,13)U(3,13,-q) = (3,0,13)+(2,1,5)-(3,3,4)-(5,2,8),
\end{equation}
\begin{equation}\label{ap4}
  \df{(3,0,13)-(5,2,8)}{(1,0,39)+(5,2,8)} = q^3\df{\psi(-q)\psi(-Q^3)}{\psi(-q^3)\psi(-Q)},
\end{equation}
\begin{equation}\label{ap5}
  \df{(1,1,10)+(2,1,5)-(1,0,39)-(5,2,8)}{(3,0,13)+(2,1,5)-(3,3,4)-(5,2,8)} = \df{\t(-q^6)\t(-Q^2)}{\t(-q^2)\t(-Q^6)}.
\end{equation}
\end{theorem}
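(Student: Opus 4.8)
The plan is to treat the four identities as two pairs. I regard \eqref{ap2} and \eqref{ap3}, which carry the Rogers--Ramanujan functions through $U$, as the primary identities, and I would deduce \eqref{ap4} and \eqref{ap5} from them together with elementary theta manipulations. Throughout I would use the product form \eqref{ghd}, the eta reductions $E(q)E(-q)=E(q^2)^3/E(q^4)$ and $\t(-q)=E(q)^2/E(q^2)$ (both immediate from \eqref{22i} and \eqref{19III}), and the explicit $R$-expansions \eqref{1110}, \eqref{215}, \eqref{528}, supplemented by the observations $(1,0,39)=\t(q)\t(q^{39})$ and $(3,0,13)=\t(q^3)\t(q^{13})$.

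For \eqref{ap2} I would work with the pair $(r,s)=(1,39)$, for which $r+s\equiv0\pmod5$ and $\t(q)\t(q^{39})=(1,0,39)$, and dually for \eqref{ap3} with $(r,s)=(3,13)$, for which $r-s\equiv0\pmod5$ and $\t(q^3)\t(q^{13})=(3,0,13)$. First I would expand $U(1,39,q)$ and $U(1,39,-q)$ by \eqref{ghd} into theta quotients; multiplying and clearing the eta factors against the prefactor $2q^2E(q^2)E(q^{78})$ leaves a numerator that is a sum of four products of theta functions $f$. On the other side, the combination $(1,1,10)+(2,1,5)-(1,0,39)-(5,2,8)$ becomes, via the $R$-expansions, another explicit sum of theta products, and I would reconcile the two using the $2$-dissections \eqref{tdis} and \eqref{pdis} and the addition formula \eqref{31III}. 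This matching should organise, exactly as in the proof of \eqref{a7}, into a vanishing $3\times3$ determinant whose cofactor expansion reproduces the product $U(1,39,q)U(1,39,-q)$; here the role played by \eqref{2}--\eqref{3} in the proof of Theorem \ref{thr1} is taken over by their $q\mapsto-q$ companions $G(-q)G(q^4)\mp qH(-q)H(q^4)=\t(\mp q)/E(q^2)$. Identity \eqref{ap3} is entirely parallel, with $(3,13)$ in place of $(1,39)$ and Theorem \ref{thr2} in place of Theorem \ref{thr1}.

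For \eqref{ap5} I would simply divide \eqref{ap2} by \eqref{ap3}; after cancelling the common $2q^2$ the claim reduces to
\[
E(q^2)E(q^{78})\t(-q^2)\t(-q^{78})U(1,39,q)U(1,39,-q)=E(q^6)E(q^{26})\t(-q^6)\t(-q^{26})U(3,13,q)U(3,13,-q),
\]
which I would verify after rewriting each $\t(-q^a)=E(q^a)^2/E(q^{2a})$, so that both sides reduce to an explicit eta-quotient multiplying the $U$-products already analysed for \eqref{ap2}--\eqref{ap3}. For \eqref{ap4} I would instead stay on the theta side: using the expansions of $(1,0,39)$, $(3,0,13)$ and $(5,2,8)$ together with \eqref{tdis} and \eqref{pdis}, the numerator $(3,0,13)-(5,2,8)$ and the denominator $(1,0,39)+(5,2,8)$ should each collapse, after the internal cancellations, to a single product of two $\psi$-functions, and the passage $\psi(q)\mapsto\psi(-q)$ is then supplied by the Jacobi triple product form \eqref{19III}.

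The main obstacle I anticipate is the bookkeeping around the form $(5,2,8)$: its $R$-expansion \eqref{528} contains the two genuinely off-diagonal terms $f(q^6,q^{42})f(Q^6,Q^{10})$ and $f(q^{18},q^{30})f(Q^2,Q^{14})$, and showing that these cancel precisely against the cross terms arising from $U(1,39,q)U(1,39,-q)$ (equivalently, against $2q^5\psi(q)\psi(Q^3)$ in \eqref{215} after dissection) is where the content lies; the same cancellation drives \eqref{ap4}. By comparison the determinant-vanishing step and the $\t(-q)=E(q)^2/E(q^2)$ reductions are routine.
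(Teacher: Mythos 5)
Your proposal departs from the paper's proof and, as written, has two genuine gaps. The engine of the paper's proof of \eqref{ap2} and \eqref{ap3} is the Hecke machinery you never use: Theorem \ref{thr1} applied to $(1,0,39)=\t(q)\t(Q^3)$, together with the computation $T_5(1,0,39)=2(5,2,8)$, gives $4E(q^2)E(Q^6)U(1,39)U(1,39,q^4)=2(1,0,39)+2(5,2,8)$ (this is \eqref{fpr1}); one then dissects $U(1,39)$ via \eqref{gh2dis}, notes that the even parts of $(1,0,39)$ and $(5,2,8)$ are $(1,1,10,q^4)$ and $(2,1,5,q^4)$, equates even parts, replaces $q^4$ by $q$, and subtracts \eqref{fpr1}. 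Your substitute for this step --- a vanishing $3\times3$ determinant ``exactly as in the proof of \eqref{a7}'' --- does not transfer. That determinant exists because $U(r,s)$, $U(r,4s)$ and $\t(q^r)/E(q^{2r})$ are all \emph{linear in the common pair} $(G(q^r),H(q^r))$; in $U(1,39,q)U(1,39,-q)$ the two factors are bilinear in the disjoint pairs $(G(\pm q),H(\pm q))$ and $(G(\pm Q^3),H(\pm Q^3))$, so there is no common column vector and no analogous rank argument, and you do not exhibit one. Moreover, your companion identities are incorrect: replacing $q$ by $-q$ in \eqref{3} gives $G(-q)G(q^4)+qH(-q)H(q^4)=\t(-q^5)/E(q^2)$, not $\t(q)/E(q^2)$.

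Second, both \eqref{ap4} and \eqref{ap5} require Ramanujan's identity \eqref{rmp}, $E(q)E(Q^3)U(1,39)=E(q^3)E(Q)U(3,13)$, which you never invoke. Your reduction of \eqref{ap5} to the displayed identity is correct, but that identity \emph{is} \eqref{rmp} applied at $q$ and at $-q$ (after the reductions $\t(-q^a)=E(q^a)^2/E(q^{2a})$ and $E(q^a)E(-q^a)=E(q^{2a})^3/E(q^{4a})$); it cannot be verified by rewriting the theta factors alone, since the two $U$-products do not cancel against each other. For \eqref{ap4} your route fails outright: $(1,0,39)+(5,2,8)$ does \emph{not} collapse to a single product of two $\psi$-functions --- its expansion begins $2+2q+\cdots$, whereas $2\psi(-q^3)\psi(-Q)=2-2q^3-\cdots$ has no $q^1$ term. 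Indeed, by \eqref{fpr1} that combination equals $2E(q^2)E(Q^6)U(1,39)U(1,39,q^4)$, which is not a theta product; the paper obtains \eqref{ap4} by taking the ratio of \eqref{fpr7} to \eqref{fpr1} and then applying \eqref{rmp} twice (at $q$ and at $q^4$). Some such Rogers--Ramanujan input is unavoidable here, and any completion of your plan would have to supply it.
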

 \begin{proof}
 We start by proving \eqref{ap2}. Using Theorem \ref{thr1}, we see that
 \begin{equation}\label{fpr1}
 4E(q^2)E(Q^6)U(1,39)U(1,39,q^4)=2(1,0,39)+T_5(1,0,39)=2(1,0,39)+2(5,2,8).
 \end{equation}
 From the odd-even dissections of the functions $G(q)$ and $H(q)$, i.e.\ \eqref{gh2dis}, we have
 \begin{equation*}
 \df{E(q^2)E(Q^6)}{E(q^8)E(Q^{24})}U(1,39)=U(1,39,q^{16})+q^{8}U(1,39,-q^4)+qU(1,156,-q^4)+q^{39}U(39,4,-q^4).
 \end{equation*}
From \eqref{tdis} we get
\begin{equation}\label{trsdis}
\t(q^r)\t(q^s)=\t(q^{4r})\t(q^{4s})+4q^{r+s}\psi(q^{8r})\psi(q^{8s})+2q^s\t(q^{4r})\psi(q^{8s})+2q^r\t(q^{4s})\psi(q^{8r}).
\end{equation}
Examining \eqref{1110} and \eqref{trsdis}, we observe that the even part of $(1,0,39)$ is $(1,1,10,q^4)$.
From \eqref{215} and \eqref{528}, we immediately see that the even part of $(5,2,8)$ is $(1,1,10,q^4)$. Therefore, by equating the even parts in both sides of \eqref{fpr1}, we conclude that
\begin{equation*}
2E(q^8)E(Q^{24})\left(U(1,39,q^{16})+q^{8}U(1,39,-q^4)\right)U(1,39,q^4)=(1,1,10,q^4)+(2,1,5,q^4).
\end{equation*}
Next, we replace $q^4$ by $q$ and use \eqref{fpr1} to arrive at \eqref{ap2}. To prove \eqref{ap3}, we use Theorem \ref{thr2} and  start with the identity
\begin{equation}\label{fpr7}
4q^3E(q^6)E(Q^2)U(3,13)U(3,13,q^4)=2(3,0,13)-T_5(3,0,13)=2(3,0,13)-2(5,2,8).
\end{equation}
By looking at the even parts in both sides of this equation and arguing as before, we arrive at \eqref{ap2}.
Ramanujan observed that \cite[Entry 3.19]{memoir}
\begin{equation}\label{rmp}
E(q)E(Q^3)U(1,39)=E(q^3)E(Q)U(3,13).
\end{equation}
By \eqref{ap2}, \eqref{ap3}, and \eqref{rmp}, and using some elementary product manipulations, we deduce \eqref{ap5}. Similarly, \eqref{ap4} follows from \eqref{fpr1}, \eqref{fpr7}, and \eqref{rmp}.
\end{proof}

\begin{remark}
It can be easily verified by appealing to theory of modular forms that
\begin{equation*}
2E(q^3)E(Q)U(1,39)=(1,1,10)+(2,1,5)
\end{equation*}
and
\begin{equation*}
2q^2E(q)E(Q^3)U(3,13)=(2,1,5)-(3,3,4).
\end{equation*}
It is also easy to prove (see, for example, the proof of \eqref{tkmm3}) that
\begin{equation*}
2qE(q)E(Q^3)U(1,39)=2qE(q^3)E(Q)U(3,13)= (1,1,10)-(3,3,4).
\end{equation*}
From these last three equations we easily observe that
\begin{equation*}
\sqrt{\df{ (2,1,5)-(3,3,4)}{(1,1,10)+(2,1,5)}}=\df{(2,1,5)-(3,3,4)}{(1,1,10)-(3,3,4)}=\df{(1,1,10)-(3,3,4)}{(1,1,10)+(2,1,5)}=q\df{E(q)E(Q^3)}{E(q^3)E(Q)}.
\end{equation*}
\end{remark}

Next, we treat identities involving the quadratic forms $(2,1,44)$, $(8,1,11)$, $(4,1,22)$, $(10,7,10)$, $(5,3,18)$, $(1,1,88)$, and $(9,3,10)$ of discriminant $-351$.

\begin{theorem} The following five facts are true (with $Q := q^{13}$):
\begin{equation}\label{tk1}
(2,1,44)-(8,1,11)=2q^2E(q^9)E(Q^3),
\end{equation}
\begin{equation}\label{tk2}
(5,3,18)-(8,1,11)=2q^5E(q^3)E(Q^9),
\end{equation}
\begin{equation}\label{tk3}
(4,1,22)-(10,7,10)=2q^4E(q^9)E(Q^3)U(3,13,q^3)^2,
\end{equation}
\begin{equation}\label{tk4}
(1,1,88)-(9,3,10)=2qE(q^3)E(Q^9)U(1,39,q^3)^2,
\end{equation}
\begin{equation}\label{tk5}
\df{(4,1,22)-(10,7,10)}{(1,1,88)-(9,3,10)}=\df{(5,3,18)-(8,1,11)}{(2,1,44)-(8,1,11)}=q^3\df{E(q^3)E(Q^9)}{E(q^9)E(Q^3)}.
\end{equation}
\end{theorem}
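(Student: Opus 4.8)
The plan is to organize the five identities into three tiers: the base eta-product identities \eqref{tk1}--\eqref{tk2}, the Hecke-lifted identities \eqref{tk3}--\eqref{tk4}, and the ratio identity \eqref{tk5}. The unifying observation is that, since $\eta(\tau)=q^{1/24}E(q)$, the right-hand sides of \eqref{tk1} and \eqref{tk2} are exactly $2\eta(9\tau)\eta(39\tau)=2q^2E(q^9)E(Q^3)$ and $2\eta(3\tau)\eta(117\tau)=2q^5E(q^3)E(Q^9)$. The pairs $(9,39)$ and $(3,117)$ both satisfy $r+s\equiv 0\pmod{24}$, with $9-39\equiv 0\pmod 5$ and $3+117\equiv 0\pmod 5$, so Theorem \ref{thr3} applies to each and produces precisely the squares $U(9,39)^2=U(3,13,q^3)^2$ and $U(3,117)^2=U(1,39,q^3)^2$ that appear in \eqref{tk3}--\eqref{tk4}. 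Thus \eqref{tk3}--\eqref{tk4} should fall out of applying $T_5$ to \eqref{tk1}--\eqref{tk2}.

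First I would prove the base identities \eqref{tk1} and \eqref{tk2}. Following the treatment of the discriminant $-39$ and $-156$ forms in \eqref{215}--\eqref{528}, I would write each of the seven theta series of discriminant $-351$ by means of the representation \eqref{qrr}, reduce the modulus $p$ that occurs in \eqref{RGdef} using the transformation formulas \eqref{prtt1} and \eqref{prt1} (for example $(2,1,44)=R(0,0,0,0,1,351,1,88,4)=R(0,0,0,0,1,351,1,4,88)$, a four-term sum), and expand each form as a short sum of products of theta functions $f$. In this format \eqref{tk1} and \eqref{tk2} become identities among theta products whose differences I expect to telescope onto $2\eta(9\tau)\eta(39\tau)$ and $2\eta(3\tau)\eta(117\tau)$ after elementary product manipulations. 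Alternatively, since both sides of each identity are holomorphic modular forms of weight one on a common congruence subgroup, \eqref{tk1}--\eqref{tk2} can be confirmed by matching $q$-expansions up to the Sturm bound. This step carries the bulk of the computation and is the main obstacle.

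With \eqref{tk1} established I would apply $T_5$ to both sides. On the right, Theorem \ref{thr3} in the form \eqref{thr3p2} with $(r,s)=(9,39)$ gives
\[
T_5\bigl(\eta(9\tau)\eta(39\tau)\bigr)=-\eta(9\tau)\eta(39\tau)\bigl(q^{(11\cdot 9-39)/60}U(9,39)\bigr)^2=-q^4E(q^9)E(Q^3)U(3,13,q^3)^2.
\]
On the left, $T_5$ acts on the theta series of the forms; I would compute this action from the Hecke action on binary quadratic forms of fixed discriminant \cite[p.~794]{Hecke} (equivalently, from the stated reduction formula for $T_5$ together with the theta decompositions of the previous step), verifying the form-theoretic identity $T_5(8,1,11)-T_5(2,1,44)=(4,1,22)-(10,7,10)$. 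Since \eqref{tk1} asserts the two sides are equal, equating their $T_5$-images yields $(4,1,22)-(10,7,10)=-T_5\bigl(2\eta(9\tau)\eta(39\tau)\bigr)=2q^4E(q^9)E(Q^3)U(3,13,q^3)^2$, which is \eqref{tk3}. In the same way \eqref{tk4} follows via \eqref{thr3p1} with $(r,s)=(3,117)$ from the Hecke identity $T_5(5,3,18)-T_5(8,1,11)=(1,1,88)-(9,3,10)$.

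Finally \eqref{tk5} is formal. Its second equality follows immediately by dividing \eqref{tk2} by \eqref{tk1}. For the first equality I would divide \eqref{tk3} by \eqref{tk4} and invoke Ramanujan's relation \eqref{rmp} with $q$ replaced by $q^3$, that is $E(q^3)E(Q^9)U(1,39,q^3)=E(q^9)E(Q^3)U(3,13,q^3)$; this identifies $U(3,13,q^3)/U(1,39,q^3)$ with $E(q^3)E(Q^9)/\bigl(E(q^9)E(Q^3)\bigr)$, so squaring it cancels the $U$-factors against one power of the eta quotient and leaves $q^3E(q^3)E(Q^9)/\bigl(E(q^9)E(Q^3)\bigr)$, as required.
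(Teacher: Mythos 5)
Your proposal has the same architecture as the paper's proof: establish the base identities \eqref{tk1}--\eqref{tk2}, lift them with $T_5$ via Theorem \ref{thr3} (with $(r,s)=(9,39)$ and $(3,117)$, noting $U(9,39)=U(3,13,q^3)$ and $U(3,117)=U(1,39,q^3)$) to get \eqref{tk3}--\eqref{tk4}, and obtain \eqref{tk5} by division together with Ramanujan's relation \eqref{rmp} at argument $q^3$; these last two tiers of your argument are correct and essentially verbatim what the paper does. Indeed, your form-theoretic identity $T_5(8,1,11)-T_5(2,1,44)=(4,1,22)-(10,7,10)$ is the correct normalization: the paper displays these actions as $2T_5(2,1,44)=(9,3,10)+(10,7,10)$ and $2T_5(8,1,11)=(4,1,22)+(9,3,10)$, but the factor $2$ there is spurious (it already fails at the constant term, since the paper's reduction formula gives $T_5$ a constant term of $2$ on a theta series), and the paper's own subsequent deduction silently uses the version without it. The genuine divergence is at \eqref{tk1}--\eqref{tk2}, which is where the bulk of the paper's work lies, and there your primary plan understates what is needed: after expanding $(2,1,44)$ and $(8,1,11)$ via \eqref{qrr}, \eqref{prtt1}, \eqref{RGdef} and dissecting with \eqref{tdis} and \eqref{pdis}, the difference is the nine-term alternating sum \eqref{tk11}, and it does not collapse to $2q^2E(q^9)E(Q^3)$ by ``elementary product manipulations.'' The paper's mechanism is to recognize that nine-term sum, via \eqref{RGdef}, as $R(1,0,1,0,1,351,9,16,27)$, and then to run the transformation \eqref{prtt1} in reverse to identify it with $R(0,1,0,1,9,39,1,3,16)$, which \eqref{RGdef} evaluates directly as $2q^2E(q^9)E(Q^3)$; this reuse of the $R$-machinery with nonzero sign parameters ($\epsilon$ or $\delta$ equal to $1$) is the key idea your sketch is missing. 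That said, your declared fallback does close the gap by a different route: both sides of \eqref{tk1} are holomorphic weight-one forms on $\Gamma_0(351)$ with character $\left(\tfrac{-351}{\cdot}\right)$ (the eta product qualifies because $9+39\equiv 0\pmod{24}$), so equality follows from agreement of $q$-expansions up to the Sturm bound $504/12=42$. So your proof is completable, with the base identities certified by modular forms rather than by the paper's $R$-function transformation, and identical to the paper everywhere else.
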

\begin{proof}
Using \eqref{qrr} and \eqref{RGdef}, we get
\begin{equation}\label{tk6}
\begin{split}
(2,1,44) &= (44,1,2) = R(0,0,0,0,1,351,1,4,88)\\
         &= \t(q^2)\t(Q^{54})+2q^{44}\psi(q)\psi(Q^{27})+4q^{176}\psi(q^4)\psi(Q^{108})
\end{split}
\end{equation}
as well as
\begin{equation}\label{tk7}
\begin{split}
(8,1,11) &= (11,1,8) = R(0,0,0,0,1,351,1,16,22)\\
         &= \t(q^8)\t(Q^{216})+2q^{11}f(q^7,q^{9})f(Q^{189},Q^{243})+2q^{44}f(q^6,q^{10})f(Q^{162},Q^{270})\\
         &\quad +2q^{99}f(q^5,q^{11})f(Q^{135},Q^{297})+2q^{176}\psi(q^4)\psi(Q^{108})+2q^{275}f(q^3,q^{13})f(Q^{81},Q^{351})\\
         &\quad +2q^{396}f(q^2,q^{14})f(Q^{54},Q^{378})+2q^{539}f(q,q^{15})f(q^{27},q^{405})+4q^{704}\psi(q^{16})\psi(Q^{432}).
\end{split}
\end{equation}
We take \eqref{tk6} and we expand $\t(q^2)\t(Q^{54})$ by using \eqref{tdis} and we similarly expand $\psi(q)\psi(Q^{27})$ by using \eqref{pdis}. After this expansion we subtract \eqref{tk7} from the expanded \eqref{tk6} and arrive at
\begin{equation}\label{tk11}
\begin{split}
(2,1,44)-(8,1,11) &= 2{q}^{2}\psi({q}^{16})\t( {Q}^{216}) -2{q}^{11}f({q}^{7},{q}^{9}) f({Q}^{189},{Q}^{243})\\
                  &\quad +2{q}^{45}f({q}^{2},{q}^{14}) f({Q}^{162},{Q}^{270}) -2{q}^{99}f({q}^{5},{q}^{11}) f({Q}^{135},{Q}^{297})\\
                  &\quad +2{q}^{176}f({q}^{4},{q}^{12}) f({Q}^{108},{Q}^{324})-2{q}^{275}f({q}^{3},{q}^{13}) f({Q}^{81},{Q}^{351})\\
                  &\quad +2{q}^{395}f({q}^{6},{q}^{10}) f({Q}^{54},{Q}^{378}) -2{q}^{539}f(q,{q}^{15}) f({Q}^{27},{Q}^{405})\\
                  &\quad +2{q}^{702}\t({q}^{8})\psi({Q}^{432}).
\end{split}
\end{equation}
By \eqref{RGdef}, we have
\begin{equation*}
R(0,1,0,1,9,39,1,3,16)=2q^2E(q^9)E(Q^3).
\end{equation*}
Next, we employ \eqref{prtt1} and find that
$R(0,1,0,1,9,39,1,3,16)=R(1,0,1,0,1,351,9,16,27)$. By employing \eqref{RGdef} one more time we observe that $R(1,0,1,0,1,351,9,16,27)$ equals exactly the right side of \eqref{tk11}, which completes the proof of \eqref{tk1}.

We now observe that
\begin{equation*}
2T_5(2,1,44)=(9,3,10)+(10,7,10) \quad \text{and} \quad 2T_5(8,1,11)=(4,1,22)+(9,3,10).
\end{equation*}
Therefore, by \eqref{tk1}, we find that
\begin{equation*}
T_5(2q^2E(q^9)E(Q^3))=(10,7,10)-(4,1,22),
\end{equation*}
from which by way of \eqref{thr3p2} we immediately arrive at \eqref{tk3}.

The proofs of \eqref{tk2} and \eqref{tk4} go along the same lines as the proofs of \eqref{tk1} and \eqref{tk3}, respectively,  so we omit them. In fact one can go from one identity to the other via the map $\tau \mapsto -351/\tau$.

Finally, \eqref{tk5} follows from \eqref{rmp}.
\end{proof}

\begin{remark}
By a straightforward but quite lengthy argument one can eliminate $q^3$ from either \eqref{tk2} or \eqref{tk4}, resulting in
\begin{equation}\label{bse1}
\begin{split}
\left(E(q^3)E(Q)U(3,13)\right)^2 &= \left(E(q)E(Q^3)U(1,39)\right)^2\\
                                 &=\t(-q^2)\t(-Q^6)\psi(-Q)\psi(-q^3)-q^3\t(-Q^2)\t(-q^6)\psi(-q)\psi(-Q^3).
\end{split}
\end{equation}
From \eqref{bse1} we may then deduce the following identity which is similar to those found by Ramanujan:
\begin{equation*}
U(1,39)U(3,13)=\df{\k(q)\k(Q^3)}{\k(-q^6)\k(-Q^2)}-q^3\df{\k(Q)\k(q^3)}{\k(-Q^6)\k(-q^2)}.
\end{equation*}
\end{remark}

The next theorem concerns relations for the quadratic forms $(1,1,18)$, $(2,1,9)$, $(4,3,5)$, and $(3,1,6)$ of discriminant $-71$. Here we now set $Q:=q^{71}$.

\begin{theorem} The following five facts are true (with $Q := q^{71}$):
\begin{equation}\label{tkmm1}
2q^3E(q)E(Q)=(3,1,6)-(4,3,5),
\end{equation}
\begin{equation}\label{tkmm2}
2qE(q)E(Q)U(1,71)^2=(3,1,6)-(4,3,5)-(2,1,9)+(1,1,18),
\end{equation}
\begin{equation}\label{tkmm3}
2q^2E(q)E(Q)U(1,71)=(2,1,9)-(3,1,6),
\end{equation}
\begin{equation}\label{tkmm4}
2qU(1,71,q^2)=-2q^3+\k(q)\k(Q)-\k(-q)\k(-Q)-2q^9\df{1}{\k(-q^2)\k(-Q^2)},
\end{equation}
\begin{equation}\label{tkmm5}
\left((3,1,6)-(4,3,5)-(2,1,9)+(1,1,18)\right)\left((3,1,6)-(4,3,5)\right)=\left((2,1,9)-(3,1,6)\right)^2.
\end{equation}
\end{theorem}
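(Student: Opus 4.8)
Throughout, $Q=q^{71}$; the four reduced forms of discriminant $-71$ with distinct theta series are $(1,1,18)$, $(2,1,9)$, $(3,1,6)$, and $(4,3,5)$. The plan is to establish \eqref{tkmm1}, \eqref{tkmm3}, and \eqref{tkmm2} first, to read off \eqref{tkmm5} as a purely algebraic consequence, and to obtain \eqref{tkmm4} last by a product manipulation. I would prove \eqref{tkmm1} exactly as \eqref{tk1} was proved: first use \eqref{qrr} to write $(3,1,6)=R(0,0,0,0,1,71,1,12,6)$ and $(4,3,5)=R(0,0,0,0,1,71,3,10,8)$, expand both via \eqref{RGdef} into sums of products of $\t$-, $\psi$-, and $f$-functions (splitting with \eqref{tdis} and \eqref{pdis} where convenient, as in \eqref{1110} and \eqref{215}), and subtract. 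On the other side, I would write $2q^3E(q)E(Q)=2\eta(\tau)\eta(71\tau)$ as a single $R$-function --- in the manner that $2q^2E(q^9)E(Q^3)=R(0,1,0,1,9,39,1,3,16)$ in the proof of \eqref{tk1}, with $\d=1$ supplying the $f(-\cdot,-\cdot)$ factors that assemble into the two $E$'s --- transform it by \eqref{prtt1}, and match it term by term against the expansion of $(3,1,6)-(4,3,5)$. For \eqref{tkmm3} I would run the template of its own proof in reverse: since $(s-r)/5=14$, \eqref{ghd} gives
\[ E(q)E(Q)U(1,71)=f(-q,-q^4)f(-Q^2,-Q^3)-q^{14}f(-q^2,-q^3)f(-Q,-Q^4), \]
so $2q^2E(q)E(Q)U(1,71)$ is a difference of two products of theta functions; recognizing each product as an $R$-function (now with $\d=\e=1$ to supply the minus signs) and applying \eqref{prtt1} identifies the combination with $(2,1,9)-(3,1,6)$.

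For \eqref{tkmm2} I would apply $T_5$ to \eqref{tkmm1}. On the left, since $r-s=1-71\equiv0\pmod5$, \eqref{thr3p2} gives $T_5\!\left(2\eta(\tau)\eta(71\tau)\right)=-2\eta(\tau)\eta(71\tau)\bigl(q^{(11-71)/60}U(1,71)\bigr)^2=-2qE(q)E(Q)U(1,71)^2$. On the right I would compute the Hecke action on the two forms. Because $5$ splits in the order of discriminant $-71$ (the prime above $5$ being represented by $(5,3,4)$, whose class lies in that of $(4,3,5)$), and because the class group is cyclic generated by $(2,1,9)$, with $(2,1,9)^2$ and $(2,1,9)^3$ in the classes of $(4,3,5)$ and $(3,1,6)$ respectively (inversion being invisible to the theta series), one gets $T_5(3,1,6)=(2,1,9)+(4,3,5)$ and $T_5(4,3,5)=(1,1,18)+(3,1,6)$, so that $T_5\bigl((3,1,6)-(4,3,5)\bigr)=-\bigl[(3,1,6)-(4,3,5)-(2,1,9)+(1,1,18)\bigr]$. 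Equating the two sides yields \eqref{tkmm2}.

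Equation \eqref{tkmm5} is then immediate: multiplying \eqref{tkmm2} by \eqref{tkmm1} gives $4q^4E(q)^2E(Q)^2U(1,71)^2$, which is exactly the square of the right-hand side of \eqref{tkmm3}, so no new theta-function input is needed. The remaining and subtlest identity \eqref{tkmm4} I would obtain in the spirit of the remark producing \eqref{bse1}: combine \eqref{tkmm1}--\eqref{tkmm3} with the explicit product expansions of the four forms to reach a squared eta-product identity, then convert the $\t$- and $\psi$-factors into $\k(q)=(-q;q^2)_\i$ products via $\t(-q)=\k(-q)^2E(q^2)$ and $\psi(\pm q)=E(q^2)/\k(\mp q)$, with the argument $q^2$ and the stray monomials $-2q^3$ and $-2q^9$ emerging from a $2$-dissection, exactly as ``replacing $q^4$ by $q$'' did in the proof of \eqref{ap2}. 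The two places that demand genuine care are this final conversion-and-dissection bookkeeping for \eqref{tkmm4} and the verification of the $T_5$-action on the discriminant $-71$ forms used in \eqref{tkmm2}; everything else is the same $R$-function manipulation already exercised in the paper.
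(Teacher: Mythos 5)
Your proposal is correct and follows essentially the same route as the paper's own proof: \eqref{tkmm1} and \eqref{tkmm3} via the $R$-function expansions and the transformation \eqref{prtt1} exactly as in \eqref{tk1} and \eqref{tkmm6}--\eqref{oiu}, \eqref{tkmm2} by applying $T_5$ to \eqref{tkmm1} through \eqref{thr3p2} together with the same Hecke action $T_5(3,1,6)=(4,3,5)+(2,1,9)$ and $T_5(4,3,5)=(3,1,6)+(1,1,18)$, \eqref{tkmm5} as a purely algebraic corollary of \eqref{tkmm1}--\eqref{tkmm3}, and \eqref{tkmm4} by $2$-dissecting the forms, invoking \eqref{tkmm1} and \eqref{tkmm3} with $q$ replaced by $q^2$, and converting to $\k$-products. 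The only addition is your class-group verification of the $T_5$ action (via $(2,1,9)^2\sim(4,3,5)$ and $(2,1,9)^3\sim(3,1,6)$, with $5$ split), which correctly fills in a step the paper asserts only by citation to Hecke.
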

\begin{proof}
The proof of \eqref{tkmm1} is similar to that of \eqref{tk1} so we omit the details.

The identity \eqref{tkmm2} follows from \eqref{thr3p2} once we observe that
\begin{equation*}
T_5(3,1,6)=(4,3,5)+(2,1,9) \quad \text{and} \quad T_5(4,3,5)=(3,1,6)+(1,1,18).
\end{equation*}

Using \eqref{RGdef} and \eqref{ghd}, we get
\begin{equation}\label{tkmm6}
\begin{split}
R(0,1,0,1,1,71,3,5,16) &= 2{q}^{2}f(-q,-{q}^{4}) f(-{Q}^{2},-{Q}^{3}) -2{q}^{16}f(-{q}^{2},-{q}^{3}) f(-Q,-{Q}^{4})\\
                       &= 2q^2E(q)E(Q)U(1,71).
\end{split}
\end{equation}
From \eqref{prtt1} and \eqref{RGdef}, we also find that
\begin{equation}\label{oiu}
\begin{split}
R(0,1,0,1,1&,71,3,5,16)\\
&= R(1,0,1,0,1,71,3,16,5)\\
&= 2{q}^{2}\psi({q}^{16})\t(Q^8) -2{q}^{3}f({q}^{3},{q}^{13}) f({Q}^{7},{Q}^{9}) +2{q}^{9}f({q}^{6},{q}^{10}) f({Q}^{6},{Q}^{10})\\
&\quad -2{q}^{20}f({q}^{7},{q}^{9}) f({Q}^{5},{Q}^{11}) +2{q}^{36}\psi({q}^{4}) \psi({Q}^{4}) -2{q}^{57}f(q,{q}^{15}) f({Q}^{3},{Q}^{13})\\
&\quad +2{q}^{81}f({q}^{2},{q}^{14}) f({Q}^{2},{Q}^{14}) -2{q}^{109} f({q}^{5},{q}^{11}) f(Q,{Q}^{15}) +2{q}^{142}\t({q}^{8})\psi({Q}^{16}).
\end{split}
\end{equation}
By \eqref{qrr} and \eqref{RGdef}, we have
\begin{equation}\label{atk5}
\begin{split}
(2,1,9)=(9,2,1)&=R(0,0,0,0,1,71,1,4,18)\\
&=\t(q^2)\t(Q^2)+2q^9\psi(q)\psi(Q)+4q^{36}\psi(q^4)\psi(Q^4).
\end{split}
\end{equation}
From \eqref{qrr}, \eqref{prtt1}, and \eqref{RGdef}, we deduce that
\begin{equation}\label{atk7}
\begin{split}
(3,1,6)
&= R(0,0,0,0,1,71,1,12,6)\\
&= R(0,0,0,0,1,71,-5,16,6)\\
&= \t(q^8)\t(Q^8)+2q^3f(q^3,q^{13})f(Q^7,Q^9)+2q^{10}f(q^2,q^{14})f(Q^6,Q^{10})\\
&\quad +2q^{20}f(q^7,q^9)f(Q^5,Q^{11})+2q^{36}\psi(q^4)\psi(Q^4)+2q^{57}f(q,q^{15})f(Q^3,Q^{13})\\
&\quad +2q^{80}f(q^6,q^{10})f(Q^2,Q^{14})+2q^{109}f(q^5,q^{11})f(Q,Q^{15})+4q^{144}\psi(q^{16})\psi(Q^{16}).
\end{split}
\end{equation}
We then take \eqref{atk5}, expand $\t(q^2)\t(Q^{54})$ as per \eqref{tdis} and expand $\psi(q)\psi(Q^{27})$ as per \eqref{pdis}, and then subtract \eqref{atk7} to obtain the right-hand side of \eqref{oiu}. Then by \eqref{oiu} and \eqref{tkmm6}, the proof of \eqref{tkmm3} is complete.

Next observe that by \eqref{qrr} and \eqref{RGdef} we have
\begin{align*}
(4,3,5)
&= (5,4,3)\\
&= R(0,0,0,0,1,71,3,8,10)\\
&= \t(q^4)\t(Q^4)+2q^5f(q,q^7)f(Q^3,Q^5)+2q^{18}\psi(q^2)\psi(Q^2)\\
&\quad +2q^{40}f(q^3,q^5)f(Q,Q^7)+4q^{72}\psi(q^8)\psi(Q^8)\\
&= \left(\t(q)\t(Q)+\t(-q)\t(-Q)\right)/2+2q^{18}\psi(q^2)\psi(Q^2)\\
&\quad +2q^5f(q,q^7)f(Q^3,Q^5)+2q^{40}f(q^3,q^5)f(Q,Q^7),
\end{align*}
where in the last step we used \eqref{trsdis}. We then replace $q$ by $q^2$ and use \eqref{pdis} to conclude that
\begin{equation}\label{dfe8}
\begin{split}
2(4,3,5,q^2) &= \t(q^2)\t(Q^2)+\t(-q^2)\t(-Q^2)+4q^{36}\psi(q^4)\psi(Q^4)\\
             &\quad +2q^9\psi(q)\psi(Q)-2q^9\psi(-q)\psi(-Q).
\end{split}
\end{equation}
In \eqref{atk5}, we use \eqref{trsdis}, then we replace $q$ by $q^2$ and subtract \eqref{dfe8} from the resulting identity to obtain
\begin{equation*}
\begin{split}
2(2,1,9,q^2)-2(4,3,5,q^2) &= \t(q)\t(Q)+\t(-q)\t(-Q)+4q^{18}\psi(q^2)\psi(Q^2)\\
                          &\quad -\t(q^2)\t(Q^2)-\t(-q)\t(-Q)-4q^{36}\psi(q^4)\psi(Q^4)\\
                          &\quad -2q^9\psi(q)\psi(Q)+2q^9\psi(-q)\psi(-Q).
\end{split}
\end{equation*}
From the identities established in \cite[pp.~448--9]{III} it now follows that
\begin{equation}\label{njf}
(2,1,9,q^2)-(4,3,5,q^2)=q^3E(-q)E(-Q)-q^3E(q)E(Q)-2q^{12}E(q^4)E(Q^4).
\end{equation}
By \eqref{tkmm1} and \eqref{tkmm3}, with $q$ replaced by $q^2$ in each, and by \eqref{njf}, we conclude that
\begin{equation*}
2q^4E(q^2)E(Q^2)U(1,71,q^2)=q^3E(-q)E(-Q)-q^3E(q)E(Q)-2q^{12}E(q^4)E(Q^4)-2q^6E(q^2)E(Q^2),
\end{equation*}
which is clearly equivalent to \eqref{tkmm4}.

Lastly, the identity \eqref{tkmm5} follows trivially from \eqref{tkmm1}--\eqref{tkmm3}.
\end{proof}
\begin{remark}
Ramanujan almost always expressed the function $U(r,s)$ in terms of the function $\k(q)$ at related arguments, but he did not have an identity for the modulus $71$.
\end{remark}

The following identity  is for the quadratic forms of discriminant $-56$. This identity was stated in \cite[p.~25]{bh} without a proof. Note that we now set $Q:=q^7$.
\begin{theorem} Let $Q:=q^7$. Then,
\begin{equation}\label{rmc1}
\df{(1,0,14)-(3,2,5)}{(2,0,7)+(3,2,5)}=q\df{E^2(Q^4)E^2(q^2)}{E^2(Q^2)E^2(q^4)}.
\end{equation}
\end{theorem}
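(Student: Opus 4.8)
The plan is to use Theorems \ref{thr1} and \ref{thr2} to rewrite the two quadratic-form combinations in \eqref{rmc1} in terms of the Rogers--Ramanujan functions, and then to collapse the resulting quotient to the stated eta-quotient. Since $Q=q^7$ we have the diagonal decompositions $(1,0,14)=\t(q)\t(Q^2)$ and $(2,0,7)=\t(q^2)\t(Q)$. As $1+14\equiv 0\pmod 5$, the first product is governed by Theorem \ref{thr1} with $(r,s)=(1,14)$; as $2-7\equiv 0\pmod 5$, the second is governed by Theorem \ref{thr2} with $(r,s)=(2,7)$. The one auxiliary ingredient I would establish first is the pair of Hecke evaluations $T_5(1,0,14)=2(3,2,5)=T_5(2,0,7)$; both follow from the fact that $T_5$ preserves the span of the discriminant $-56$ forms \cite[p.~794]{Hecke}, together with a short comparison of leading coefficients, and are equivalent to the statement that $(1,0,14)-(2,0,7)$ lies in the kernel of $T_5$.

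Combined with these Hecke values, the difference relation \eqref{he2} of Theorem \ref{thr1} at $(r,s)=(1,14)$ and the sum relation of Theorem \ref{thr2} at $(r,s)=(2,7)$ give, respectively,
\begin{equation*}
(1,0,14)-(3,2,5)=2q\,E(q^2)E(Q^4)\,U(1,56)U(4,14)
\end{equation*}
and
\begin{equation*}
(2,0,7)+(3,2,5)=2\,E(q^4)E(Q^2)\,U(2,28)U(7,8).
\end{equation*}
Dividing the first by the second cancels the factors $2$ and yields
\begin{equation*}
\df{(1,0,14)-(3,2,5)}{(2,0,7)+(3,2,5)}=q\,\df{E(q^2)E(Q^4)}{E(q^4)E(Q^2)}\cdot\df{U(1,56)U(4,14)}{U(2,28)U(7,8)}.
\end{equation*}
Comparing with \eqref{rmc1}, the theorem is therefore equivalent to the single Rogers--Ramanujan identity
\begin{equation*}
\df{U(1,56)U(4,14)}{U(2,28)U(7,8)}=\df{E(q^2)E(Q^4)}{E(q^4)E(Q^2)}.
\end{equation*}

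Establishing this relation is the heart of the argument and, I expect, the main obstacle. Unlike the situation in \eqref{ap4}--\eqref{ap5}, the numerator and denominator each carry a product of two \emph{distinct} functions $U$ rather than a single $U$ at two arguments, so the reduction does not split into two independent applications of an identity of the type \eqref{rmp}; the four Rogers--Ramanujan functions must be handled as one product. I would prove it in one of two ways. The constructive route is to clear the eta-quotient and replace every $G$ and $H$ by a theta quotient via \eqref{ghd}, turning the claim into an identity among the functions $f(a,b)$ that can then be verified with the Jacobi triple product \eqref{19III} and the addition formula \eqref{31III}, exactly in the spirit of the derivations of \eqref{215}, \eqref{528}, and \eqref{oiu}; this is straightforward but lengthy. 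The cleaner route is to cross-multiply back to
\begin{equation*}
\bigl((1,0,14)-(3,2,5)\bigr)E^2(Q^2)E^2(q^4)=q\,E^2(Q^4)E^2(q^2)\bigl((2,0,7)+(3,2,5)\bigr),
\end{equation*}
observe that both sides are holomorphic modular forms of weight $3$ on a common group $\G_0(N)$ with the same character, and verify the identity by matching $q$-expansions up to the Sturm bound. Either way, \eqref{rmc1} follows.
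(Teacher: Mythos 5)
Your reduction is sound, and its first half is exactly the paper's argument: the same decompositions $(1,0,14)=\t(q)\t(Q^2)$ and $(2,0,7)=\t(q^2)\t(Q)$, the same appeal to Theorems \ref{thr1} and \ref{thr2}, the same Hecke evaluations $T_5(1,0,14)=T_5(2,0,7)=2(3,2,5)$, and the same resulting equivalence of \eqref{rmc1} with
\begin{equation*}
\df{U(1,56)U(4,14)}{U(2,28)U(7,8)}=\df{E(q^2)E(Q^4)}{E(q^4)E(Q^2)}.
\end{equation*}
Where you diverge is the last step, and your stated reason for diverging is incorrect: this quotient \emph{does} split into two independent applications of known identities. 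Directly from the definition of $U$ one has $U(4,14,q)=U(2,7,q^2)$ and $U(2,28,q)=U(1,14,q^2)$, so Rogers' identity \eqref{rmc4}, applied with $q$ replaced by $q^2$, gives $U(4,14)/U(2,28)=E^2(q^2)E^2(Q^4)/\bigl(E^2(q^4)E^2(Q^2)\bigr)$, while \cite[Th.~1.2]{hb} gives $U(1,56)/U(7,8)=E(q^4)E(Q^2)/\bigl(E(q^2)E(Q^4)\bigr)$; multiplying these two evaluations finishes the proof immediately, and this is precisely how the paper concludes. Of your two proposed substitutes, the modular-forms route (cross-multiply, check that both sides are weight-$3$ holomorphic forms on a common $\G_0(N)$ with character, and compare coefficients up to the Sturm bound) is rigorous and would complete the proof; it is self-contained, in the spirit of Biagioli's treatment of the forty identities, but it is a brute-force verification that obscures why the particular eta-quotient in \eqref{rmc1} appears, which the factorization makes transparent. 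Your first route, reducing the four Rogers--Ramanujan functions to theta quotients via \eqref{ghd} and manipulating products, is the genuinely weak point: identities of this type are exactly the hard content of the subject, and calling that step ``straightforward but lengthy'' without exhibiting the manipulation leaves a real gap, so only the Sturm-bound version of your proposal can be regarded as complete as written.
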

\begin{proof}
From \eqref{thr3p1} we have
\begin{equation}\label{rmc2}
4qE(q^2)E(Q^4)U(1,56)U(4,14)=2(1,0,14)-T_5(1,0,14)=2(1,0,14)-2(3,2,5).
\end{equation}
Employing \eqref{thr3p2}, we also find that
\begin{equation}\label{rmc3}
4E(q^4)E(Q^2)U(2,28)U(7,8)=2(2,0,7)+T_5(2,0,7)=2(2,0,7)+2(3,2,5).
\end{equation}
Ramanujan observed that \cite{rogers}
\begin{equation}\label{rmc4}
\df{U(1,14)}{U(2,7)}=\df{E^2(q^2)E^2(Q)}{E^2(q)E^2(Q^2)}.
\end{equation}
From \cite[Th.~1.2]{hb} we get
\begin{equation}\label{rmc5}
\df{U(1,56)}{U(7,8)}=\df{E(q^4)E(Q^2)}{E(q^2)E(Q^4)}.
\end{equation}
The identity \eqref{rmc1} now follows from \eqref{rmc2}--\eqref{rmc5}, where \eqref{rmc4} is used with $q$ replaced by $q^2$.
\end{proof}

The next identity concerns quadratic forms of discriminant $-224$.
\begin{theorem} Let $Q:=q^7$. Then,
\begin{equation}\label{rcmm1}
\df{(1,0,56)-(5,4,12)}{(7,0,8)+(3,2,19)}=q\df{E(q^4)E(Q^8)}{E(q^8)E(Q^4)}.
\end{equation}
\end{theorem}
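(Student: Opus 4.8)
The plan is to follow the proof of the discriminant $-56$ identity \eqref{rmc1} verbatim in structure: apply the $T_5$-relations furnished by Theorems \ref{thr1} and \ref{thr2} to the two principal forms $(1,0,56)=\t(q)\t(Q^8)$ and $(7,0,8)=\t(Q)\t(q^8)$, and then collapse the resulting products of Rogers--Ramanujan $U$-functions by means of \eqref{rmc5} (used once directly and once after replacing $q$ by $q^4$).

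First I would record the two Hecke actions
\[
T_5(1,0,56)=2(5,4,12),\qquad T_5(7,0,8)=2(3,2,19),
\]
each expressing the $T_5$-image of a form of discriminant $-224$ as a multiple of a single reduced form of the same discriminant, as guaranteed by \cite[p.~794]{Hecke}. Concretely, $-224\equiv 1\pmod 5$, so $5$ splits; the prime above it lies in the class of $(5,4,12)$, which is why $T_5$ carries the principal class to $2(5,4,12)$ and the class of $(7,0,8)$ to $2(3,2,19)$. Since $1-56\equiv 0\pmod 5$, Theorem \ref{thr2} with $(r,s)=(1,56)$ (using $q^{112}=Q^{16}$, $q^{224}=Q^{32}$) gives
\[
2(1,0,56)-T_5(1,0,56)=4qE(q^2)E(Q^{16})U(1,56)U(4,224),
\]
hence $(1,0,56)-(5,4,12)=2qE(q^2)E(Q^{16})U(1,56)U(4,224)$. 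Since $7+8\equiv 0\pmod 5$, Theorem \ref{thr1} with $(r,s)=(7,8)$ (using $q^{14}=Q^2$) gives
\[
2(7,0,8)+T_5(7,0,8)=4E(Q^2)E(q^{16})U(7,8)U(28,32),
\]
hence $(7,0,8)+(3,2,19)=2E(Q^2)E(q^{16})U(7,8)U(28,32)$.

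Dividing the two boxed quantities yields
\[
\df{(1,0,56)-(5,4,12)}{(7,0,8)+(3,2,19)}=q\,\df{E(q^2)E(Q^{16})}{E(Q^2)E(q^{16})}\cdot\df{U(1,56)}{U(7,8)}\cdot\df{U(4,224)}{U(28,32)}.
\]
Now \eqref{rmc5} gives $U(1,56)/U(7,8)=E(q^4)E(Q^2)/\bigl(E(q^2)E(Q^4)\bigr)$ directly, while replacing $q$ by $q^4$ in \eqref{rmc5} (and using $q^{56}=Q^8$, $q^{112}=Q^{16}$) gives $U(4,224)/U(28,32)=E(q^{16})E(Q^8)/\bigl(E(q^8)E(Q^{16})\bigr)$. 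Substituting both and cancelling the common eta-quotient factors $E(q^2)$, $E(q^{16})$, $E(Q^2)$, $E(Q^{16})$ collapses the right side to $q\,E(q^4)E(Q^8)/\bigl(E(q^8)E(Q^4)\bigr)$, which is exactly \eqref{rcmm1}.

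The hard part will be establishing the two $T_5$-relations with certainty, i.e.\ identifying precisely which reduced form each image equals. Here the class number is $8$ (the class group is $\mathbf{Z}/4\times\mathbf{Z}/2$, with four genera), so several candidate forms of discriminant $-224$ compete, and one must pin down the class of the prime above $5$ and the ensuing products in the class group---equivalently, verify the two relations directly from the reduction formula $T_5\bigl(\sum a(n)q^n\bigr)=\sum\bigl(a(5n)+a(n/5)\bigr)q^n$ applied to the theta series $\sum q^{n^2+56m^2}$ and $\sum q^{7n^2+8m^2}$, matching the output against $(5,4,12)$ and $(3,2,19)$ respectively. Once these are secured, the rest is the routine cancellation displayed above.
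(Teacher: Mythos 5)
Your proposal is correct and takes essentially the same route the paper sketches: Theorem \ref{thr2} applied to $(1,0,56)$ and Theorem \ref{thr1} applied to $(7,0,8)$, with the Hecke evaluations $T_5(1,0,56)=2(5,4,12)$ and $T_5(7,0,8)=2(3,2,19)$ (which the paper asserts, as it does for discriminant $-56$, rather than derives), followed by the quotient identity for $U(1,56)/U(7,8)$ used once at $q$ and once at $q^4$. One small point in your favor: you correctly invoke \eqref{rmc5} twice, whereas the paper's text says \eqref{rmc4} is used twice; since \eqref{rmc4} concerns $U(1,14)/U(2,7)$, which never enters the computation, that citation is evidently a slip in the paper and \eqref{rmc5} is the identity actually required.
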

\begin{proof}
The proof of \eqref{rcmm1} is very similar to that of \eqref{rmc1}, so we omit the details. Identities similar to \eqref{rmc2} and \eqref{rmc3} are established for $U(1,56)U(1,56,q^4)$ and $U(7,8)U(7,8,q^4)$ by using \eqref{thr3p2} and \eqref{thr3p1}, and then the identity \eqref{rmc4} is used twice with $q$ replaced by $q^4$ in its second application.
\end{proof}

By using Ramanujan's identities \cite{db}
\begin{equation}\label{vc1}
\df{U(1,54)}{U(2,27)}=\df{E(q^{27})E(q^{18})E(q^3)E(q^2)}{E(q^{54})E(q^9)E(q^6)E(q)}, \quad \df{U(1,34)}{U(2,17)}=\df{\k(-q^{17})}{\k(-q)}, \quad \text{and} \quad U(2,13)=U(1,26),
\end{equation}
and his other identities \cite{tony}
\begin{equation}\label{vc2}
\df{U(1,66)}{U(2,33)}=\df{E(q^{11})E(q^6)}{E(q^{22})E(q^3)} \quad \text{and} \quad \df{U(3,22)}{U(6,11)}=\df{E(q^2)E(q^{33})}{E(q)E(q^{66})},
\end{equation}
and by following exactly the same arguments as in the previous proof, we can easily establish the following identities, in corresponding order to the identities in \eqref{vc1} and \eqref{vc2}, involving quadratic forms of discriminants $-216$, $-136$, $-104$, and $-264$.

\begin{theorem} The following five facts are true :
\begin{equation*}
\df{(2,0,27)-(7,6,9)}{(1,0,54)+(5,2,11)}=q^2\df{\psi(-q)\psi(-q^6)\psi(-q^9)\psi(-q^{54})}{E(q^3)E(q^{72})\psi(-q^2)\psi(-q^{27})},
\end{equation*}
\begin{equation*}
\df{(2,0,17)-(5,2,7)}{(1,0,34)+(5,2,7)}=q^2\df{\t(-q^4)\psi(q^{17})}{\t(-q^{68})\psi(q)},
\end{equation*}
\begin{equation*}
\df{(1,0,26)-(5,4,6)}{(2,0,13)+(3,2,9)}=q\df{E(q^2)E(q^{52})}{E(q^4)E(q^{26})},
\end{equation*}
\begin{equation*}
\df{(6,0,11)-(7,4,10)}{(3,0,22)+(5,4,14)}=q^6\df{E(q^{12})E(q^{22})\psi(-q)\psi(-q^{66})}{E(q^6)E(q^8)E(q^{33})E(q^{44})},
\end{equation*}
\begin{equation*}
\df{(1,0,66)-(5,4,14)}{(2,0,33)+(7,4,10)}=q\df{E(q^{132})E(q^{44})E(q^{24})E(q^{11})E(q^6)E(q^2)}{E(q^{88})E(q^{66})E(q^{22})E(q^{12})E(q^{4})E(q^3)}.
\end{equation*}
\end{theorem}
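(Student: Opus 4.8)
The plan is to run, for each of the five ratios, the same three-stage argument used for \eqref{rmc1} and \eqref{rcmm1}. In every case the numerator and denominator are built from the two diagonal (ambiguous) forms of the given discriminant, namely $(1,0,54),(2,0,27)$ for $-216$; $(1,0,34),(2,0,17)$ for $-136$; $(1,0,26),(2,0,13)$ for $-104$; and $(3,0,22),(6,0,11)$ together with $(1,0,66),(2,0,33)$ for $-264$. Each such form is simply a product of two theta functions, $(a,0,c)=\t(q^a)\t(q^c)$ by \eqref{22i}, so I can feed it straight into Theorems \ref{thr1} and \ref{thr2}. First I would record, for each diagonal form $(a,0,c)$, its Hecke image $T_5(a,0,c)$: by the theory for binary quadratic forms \cite{Hecke} this image is a linear combination of forms of the same discriminant, and matching a few $q$-coefficients shows that it equals twice the appropriate companion form (for instance $T_5(1,0,54)=2(5,2,11)$ and $T_5(2,0,27)=2(7,6,9)$).

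With these Hecke images in hand I would apply \eqref{he1}--\eqref{he2}, or their Theorem \ref{thr2} analogues, selecting the branch according to whether $a+c\equiv 0$ or $a-c\equiv 0\pmod 5$. For the discriminant $-216$ identity, the congruences $1+54\equiv 0$ and $2-27\equiv 0\pmod 5$ pick out \eqref{he1} for $(1,0,54)$ and the second identity of Theorem \ref{thr2} for $(2,0,27)$, giving
\begin{align*}
2\bigl((1,0,54)+(5,2,11)\bigr) &= 4E(q^2)E(q^{108})U(1,54)U(4,216),\\
2\bigl((2,0,27)-(7,6,9)\bigr) &= 4q^2E(q^4)E(q^{54})U(2,27)U(8,108).
\end{align*}
Dividing the second line by the first reproduces the left side of the target identity as $q^2$ times a ratio of $E$-products and the four surviving $U$-factors $U(2,27)U(8,108)/\bigl(U(1,54)U(4,216)\bigr)$. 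The analogous pair of relations is set up the same way for the other four discriminants.

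The heart of each proof is the collapse of these $U$-factors. The two quotients $U(2,27)/U(1,54)$ and $U(8,108)/U(4,216)$ are handled by the single Ramanujan quotient of \eqref{vc1}--\eqref{vc2} attached to that discriminant's diagonal forms, used once directly and once with $q$ replaced by a suitable power (here $q\mapsto q^4$, just as \eqref{rmc4} was used with $q\mapsto q^2$ in the proof of \eqref{rmc1}); in the $-104$ case the relation $U(2,13)=U(1,26)$ of \eqref{vc1} makes the $U$-factors cancel outright. After this substitution only a pure eta-quotient remains. For the two $E$-quotient targets (discriminants $-104$ and $-264$) that eta-quotient is already the claimed right-hand side, and one verifies that the factors match term by term with nothing further to do.

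For the remaining three identities (discriminants $-216$, $-136$, $-264$), whose right-hand sides carry $\psi$ and $\t(-q)$ factors, the final step is to rewrite the leftover eta-quotient in the stated form using the product evaluations of Section 2: the Jacobi triple product \eqref{19III}, the evaluation \eqref{22i}, the relation $\psi(-q)=E(q)E(q^4)/E(q^2)$, and (for the $-136$ case, where \eqref{vc1} supplies a $\k$-quotient) the product form of $\k$. The two genuine obstacles are exactly these bookkeeping tasks: first, pinning down the correct companion form and multiplicity in each $T_5(a,0,c)$, and second, for the $\psi$-type targets, checking that the surviving eta-quotient regroups into the advertised $\psi/\t$ product. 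Both are mechanical once the template above is in place, which is why these identities follow along precisely the lines of \eqref{rmc1} and \eqref{rcmm1}.
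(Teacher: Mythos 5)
Your proposal is correct and follows essentially the same route as the paper: the authors prove these five identities exactly as you describe, pairing the $\pm T_5$ identities of Theorems \ref{thr1} and \ref{thr2} (with the Hecke images $T_5(a,0,c)$ computed as twice the companion form of the same discriminant) with the Ramanujan quotients \eqref{vc1}--\eqref{vc2}, each used once at $q$ and once at $q^4$, mirroring the proofs of \eqref{rmc1} and \eqref{rcmm1}. Your observation that the chosen sign branches yield the ``uncrossed'' products $U(r,s)U(4r,4s)$, so that a single quotient identity per discriminant suffices (and cancels outright in the $-104$ case via $U(2,13)=U(1,26)$), is precisely the mechanism the paper relies on.
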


The following relations are for quadratic forms of discriminant $-1664$. Here we set $Q:=q^{13}$.

\begin{theorem} The following facts are true (with $Q:=q^{13}$):
\begin{equation}\label{gzi1}
2q^3E(q^{16})E(Q^8)U(2,13,q^8)=(3,2,139)-(12,4,35),
\end{equation}
\begin{equation}
2q^{7}E(q^8)E(Q^{16})U(1,26,q^8)=(7,4,60)-(15,4,28),\label{gzi2}\\
\end{equation}
\begin{equation}
2q^{9}E(q^8)E(Q^{16})=(9,8,48)-(17,6,25),\label{gz1}\\
\end{equation}
\begin{equation}
2q^5E(q^{16})E(Q^8)=(5,4,84)-(20,4,21),\label{gz2}\\
\end{equation}
\begin{subequations}
\begin{align}
2q^5E(q^8)E(Q^{16})U(1,26,q^8)^2 &= (5,4,84)+(21,10,21)-(13,0,32)-(20,4,21)\label{gz3}\\
&=2q^5E(q^{16})E(Q^{8})-2q^{13}\psi(Q^8)\t(-q^8),\label{gz3b}
\end{align}
\end{subequations}
\begin{subequations}
\begin{align}
2qE(q^{16})E(Q^8)U(2,13,q^8)^2 &= (1,0,416)+(17,6,25)-(4,4,105)-(9,8,48)\label{gz4}\\
&=2q\psi(q^8)\t(-Q^8)-2q^9E(q^8)E(Q^{16}),\label{gz4b}
\end{align}
\end{subequations}
\begin{equation}\label{gz5}
\begin{split}
\df{(5,4,84)+(21,10,21)-(13,0,32)-(20,4,21)}{(1,0,416)+(17,6,25)-(4,4,105)-(9,8,48)} &= \df{(9,8,48)-(17,6,25)}{(5,4,84)-(20,4,21)}\\
&= \df{(7,4,60)-(15,4,28)}{(3,2,139)-(12,4,35)}\\
&= q^4\df{E(q^8)E(Q^{16})}{E(q^{16})E(Q^8)}.
\end{split}
\end{equation}
\end{theorem}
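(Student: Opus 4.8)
The plan is to organize the five statements around the two eta-product identities \eqref{gz1} and \eqref{gz2}, from which the remaining facts follow by the Hecke operator $T_5$ together with Ramanujan's relation $U(2,13)=U(1,26)$ recorded in \eqref{vc1}; this parallels the treatment of \eqref{tk1}--\eqref{tk5} and \eqref{tkmm1}--\eqref{tkmm5}.

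First I would establish \eqref{gz1} and \eqref{gz2} by the $R$-function method. Using \eqref{qrr} and \eqref{RGdef} I would write each of $(9,8,48),(17,6,25)$ (resp.\ $(5,4,84),(20,4,21)$) as an explicit $R$-series, expand the $\t\t$- and $\psi\psi$-products by \eqref{tdis} and \eqref{pdis}, and subtract; after an application of \eqref{prtt1} the difference should match the $R$-series of an eta product term by term, and converting $E$ to $\eta$ via \eqref{22iii} (checking $q^9$ against $8+208=216$, and $q^5$ against $16+104=120$) gives \eqref{gz1} and \eqref{gz2}. By the same method I would prove the two auxiliary identities $(13,0,32)-(21,10,21)=2q^{13}\psi(Q^8)\t(-q^8)$ and $(1,0,416)-(4,4,105)=2q\psi(q^8)\t(-Q^8)$; these are precisely what \eqref{gz3b} and \eqref{gz4b} reduce to after one substitutes \eqref{gz2} and \eqref{gz1}, respectively. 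The linear identities \eqref{gzi1} and \eqref{gzi2} are proved identically, except that the matching $R$-series now factors, via \eqref{ghd}, as an eta product times a single factor of $U$ (compare \eqref{tkmm6} and \eqref{oiu}), with $U(16,104)=U(2,13,q^8)$ and $U(8,208)=U(1,26,q^8)$.

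Next I would derive the quadratic identities \eqref{gz3} and \eqref{gz4} by applying $T_5$ to the backbone. Writing $2q^5E(q^{16})E(Q^8)=2\eta(16\tau)\eta(104\tau)$ with $16+104\equiv 0\pmod 5$, Theorem~\ref{thr3} in the form \eqref{thr3p1} gives $T_5(2q^5E(q^{16})E(Q^8))=2qE(q^{16})E(Q^8)U(2,13,q^8)^2$; writing $2q^9E(q^8)E(Q^{16})=2\eta(8\tau)\eta(208\tau)$ with $208-8\equiv 0\pmod 5$, the form \eqref{thr3p2} gives $T_5(2q^9E(q^8)E(Q^{16}))=-2q^5E(q^8)E(Q^{16})U(1,26,q^8)^2$. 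Matching against the claimed right-hand sides then reduces \eqref{gz3} and \eqref{gz4} to the two Hecke computations
\[
T_5\bigl((5,4,84)-(20,4,21)\bigr)=(1,0,416)+(17,6,25)-(4,4,105)-(9,8,48)
\]
and
\[
T_5\bigl((17,6,25)-(9,8,48)\bigr)=(5,4,84)+(21,10,21)-(13,0,32)-(20,4,21),
\]
which express $T_5$ of a form of discriminant $-1664$ as an integral combination of forms of the same discriminant and may be verified by comparing finitely many $q$-coefficients of these weight-one theta series. This is where the genuine work lies, and I expect it to be the main obstacle. Combining the two displays with \eqref{gz1}, \eqref{gz2}, and the two auxiliary identities yields both lines of \eqref{gz3} and of \eqref{gz4}.

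Finally, \eqref{gz5} is a bookkeeping step. By \eqref{gz1}--\eqref{gz2} the middle ratio equals $q^4E(q^8)E(Q^{16})/(E(q^{16})E(Q^8))$; by \eqref{gzi1}--\eqref{gzi2} the third ratio equals the same quantity times $U(1,26,q^8)/U(2,13,q^8)$, and by \eqref{gz3}--\eqref{gz4} the first ratio equals it times the square of that factor. Applying $U(2,13)=U(1,26)$ from \eqref{vc1} with $q$ replaced by $q^8$ forces $U(1,26,q^8)=U(2,13,q^8)$, so both extra factors equal $1$ and all three ratios collapse to $q^4E(q^8)E(Q^{16})/(E(q^{16})E(Q^8))$. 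I would also remark that within each of the pairs \eqref{gz1}/\eqref{gz2}, \eqref{gzi1}/\eqref{gzi2}, and \eqref{gz3}/\eqref{gz4} the second identity can be obtained from the first by an Atkin--Lehner-type involution interchanging $\{8,208\}$ with $\{16,104\}$, in the spirit of the $\tau\mapsto-351/\tau$ map used earlier, so only one representative of each pair needs the direct computation.
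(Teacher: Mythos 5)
Your proposal is correct and follows essentially the same route as the paper: the linear identities \eqref{gzi1}, \eqref{gzi2}, \eqref{gz1}, \eqref{gz2} and the reductions of \eqref{gz3b}, \eqref{gz4b} via the $R$-function machinery of \eqref{qrr}, \eqref{RGdef}, \eqref{prtt1}, \eqref{tdis}, \eqref{pdis}; the quadratic identities \eqref{gz3}, \eqref{gz4} by applying $T_5$ through Theorem \ref{thr3} together with Hecke relations among forms of discriminant $-1664$ (exactly the argument used for \eqref{tkmm2}); and \eqref{gz5} by combining these with $U(2,13)=U(1,26)$ from \eqref{vc1} with $q$ replaced by $q^8$. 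Your added points — verifying the $T_5$ form relations by comparing finitely many coefficients, and halving the work via an Atkin--Lehner-type involution — are consistent with how the paper treats the analogous steps (e.g.\ the $\tau\mapsto-351/\tau$ remark).
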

\begin{proof}
The derivations of \eqref{gzi1}, \eqref{gzi2}, \eqref{gz1}, and \eqref{gz2} are similar to that of \eqref{tk1}. The identity \eqref{gz3} is obtained from \eqref{gz1} by arguing as in the proof of \eqref{tkmm2}. This reasoning also applies to obtaining \eqref{gz4} from \eqref{gz2}. The last identity, \eqref{gz5}, follows from \eqref{gz1}, \eqref{gz2}, \eqref{gz3}, and \eqref{gz4}, together with the right-most identity in \eqref{vc1}  with $q$ replaced by $q^8$. By \eqref{gz1}, the identity \eqref{gz4b} reduces to $(1,0,416)-(4,4,105)=2q\psi(q^8)\t(-Q^8)$. From \eqref{qrr}, \eqref{prt1}, and \eqref{RGdef}, we have
\begin{align*}
(4,4,105)
&= (105,4,4)\\
&= R(0,0,0,0,2,832,2,4,210)\\
&= R(0,0,0,0,4,416,1,2,210)\\
&= \t(q^4)\t(Q^{32})+4q^{105}\psi(q^8)\psi(Q^{64}).
\end{align*}
If we apply \eqref{tdis} twice, with $q$ replaced by $-Q^8$ in the second application, we can conclude that
\begin{align*}
(1,0,416)-(4,4,105)
&= \t(q)\t(Q^{32})-\t(q^4)\t(Q^{32})-4q^{105}\psi(q^8)\psi(Q^{64})\\
&= \t(Q^{32})\left(\t(q)-\t(q^4)\right)-4q^{105}\psi(q^8)\psi(Q^{64})\\
&= 2q\t(Q^{32})\psi(q^8)-4q^{105}\psi(q^8)\psi(Q^{64})\\
&= 2q\psi(q^8)\left(\t(Q^{32})-2q^{104}\psi(Q^{64})\right)\\
&= 2q\psi(q^8)\t(-Q^8),
\end{align*}
thus proving \eqref{gz4b}. The proof of \eqref{gz3b} is similar and so we skip its details.
\end{proof}

\begin{remark}
Replacing $q^8$ with $q$ in \eqref{gz3b} and \eqref{gz4b} yields Ramanujan's identity \cite{tony}
\begin{equation*}
U(1,26)=U(2,13)=\sqrt{\df{\k(-Q)}{\k(-q)}-q\df{\k(-q)}{\k(-Q)}}.
\end{equation*}
\end{remark}

Next, we obtain relations for the quadratic forms $(3,0,7)$, $(1,0,21)$, $(5,4,5)$, and $(2,2,11)$ of discriminant $-84$. Here we set $Q:=q^7$.
\begin{theorem}The following two facts are true (with $Q:=q^7$):
\begin{equation}\label{nd1}
\df{(1,0,21)-(5,4,5)}{(3,0,7)+(2,2,11)}=q\df{E(q^2)E(Q^6)}{E(q^6)E(Q^2)},
\end{equation}
\begin{equation}\label{nd2}
\df{(3,0,7)-(2,2,11)}{(1,0,21)+(5,4,5)}=-q^2\df{E(q^6)E(Q^2)\psi^2(-q)\psi^2(-Q^3)}{E(q^2)E(Q^6)\psi^2(-q^3)\psi^2(-Q)}.
\end{equation}
\end{theorem}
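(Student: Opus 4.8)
The plan is to follow the template of the proof of \eqref{rmc1}: realize each of the four forms as a theta product, compute its image under $T_5$, apply Theorems \ref{thr1}--\ref{thr2} to rewrite the relevant sums and differences of forms as products of $E$- and $U$-functions, and finally divide and simplify by means of Ramanujan-type identities among the $U$-functions.

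First I would record the theta-product descriptions and the Hecke images. Since $(1,0,21)=\t(q)\t(Q^3)$ with $1-21\equiv 0\pmod 5$, and $(3,0,7)=\t(q^3)\t(Q)$ with $3+7\equiv 0\pmod 5$, the operator $T_5$ carries each of these into the form lying in the other genus; because $5$ splits and the class group of discriminant $-84$ is $2$-torsion, a short $q$-series computation (equivalently, a Gauss composition) gives
\begin{equation*}
T_5(1,0,21)=2(5,4,5)\qquad\text{and}\qquad T_5(3,0,7)=2(2,2,11).
\end{equation*}
Feeding $(1,0,21)$ into Theorem \ref{thr2} (with $r=1$, $s=21$) and $(3,0,7)$ into Theorem \ref{thr1} (with $r=3$, $s=7$), each in its ``$+$'' and ``$-$'' versions, yields the four building blocks
\begin{align*}
(1,0,21)-(5,4,5)&=2qE(q^2)E(Q^6)\,U(1,21)U(4,84),\\
(3,0,7)+(2,2,11)&=2E(q^6)E(Q^2)\,U(3,7)U(12,28),\\
(3,0,7)-(2,2,11)&=2q^3E(q^6)E(Q^2)\,U(3,28)U(12,7),\\
(1,0,21)+(5,4,5)&=2E(q^2)E(Q^6)\,U(1,84)U(21,4).
\end{align*}

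For \eqref{nd1} I would divide the first building block by the second. The eta factors collapse to $q\,E(q^2)E(Q^6)/\bigl(E(q^6)E(Q^2)\bigr)$, so \eqref{nd1} becomes equivalent to the single $U$-identity $U(1,21)U(4,84)=U(3,7)U(12,28)$. This in turn follows from the Ramanujan-type modulus-$21$ identity $U(1,21)=U(3,7)$, since replacing $q$ by $q^4$ then gives $U(4,84)=U(12,28)$. I would verify $U(1,21)=U(3,7)$ either by citing it among Ramanujan's identities or by a direct dissection of the kind used for \eqref{tk1}.

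For \eqref{nd2} I would divide the third building block by the fourth, obtaining
\begin{equation*}
\df{(3,0,7)-(2,2,11)}{(1,0,21)+(5,4,5)}=q^3\df{E(q^6)E(Q^2)}{E(q^2)E(Q^6)}\cdot\df{U(3,28)U(12,7)}{U(1,84)U(21,4)}.
\end{equation*}
Expanding the claimed right-hand side via $\psi(-q)=E(q)E(q^4)/E(q^2)$ (a consequence of the product formulas of Section~2), the theorem reduces to
\begin{equation*}
\df{U(3,28)U(12,7)}{U(1,84)U(21,4)}=-q^{-1}\df{\psi^2(-q)\psi^2(-Q^3)}{\psi^2(-q^3)\psi^2(-Q)},
\end{equation*}
where the sign and the leading $q^{-1}$ arise naturally from the $q^{-1}$-term of the second-kind function $U(12,7)$. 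This modulus-$84$ $U$-identity is the main obstacle, and I expect to handle it exactly as \eqref{ap4} was derived from \eqref{rmp}: by invoking Ramanujan-type eta-quotient evaluations of the pertinent modulus-$84$ ratios, applied once at $q$ and once at $q^4$, so that the two resulting eta-quotients together with the prefactor assemble into the square of a single $\psi$-quotient. Should a ready-made ratio identity be unavailable, the fallback is to expand both $U(3,28)U(12,7)$ and $U(1,84)U(21,4)$ through the $R$-function dissection \eqref{RGdef}--\eqref{prt1} and match the outcome against the theta identities of \cite[pp.~448--9]{III}, as was done in deriving \eqref{njf}.
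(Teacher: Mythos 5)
Your proposal reproduces the paper's own proof essentially verbatim: the same four building blocks from Theorems \ref{thr1}--\ref{thr2} with the Hecke images $T_5(1,0,21)=2(5,4,5)$ and $T_5(3,0,7)=2(2,2,11)$, the same use of Ramanujan's identity $U(1,21)=U(3,7)$ (and its $q\to q^4$ version) for \eqref{nd1}, and the same reduction of \eqref{nd2} to a modulus-$84$ $U$-ratio identity, which the paper settles by citing $U(7,12)/U(4,21)=U(3,28)/U(1,84)=\psi(-q)\psi(-Q^3)/\bigl(\psi(-q^3)\psi(-Q)\bigr)$ from \cite{hb}. One small correction to your sketch of that last step: the square of the $\psi$-quotient arises as the product of these two \emph{distinct} ratios, both taken at argument $q$, not from a single ratio applied once at $q$ and once at $q^4$.
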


\begin{proof}
In \cite{hb}, we obtained identities relating the functions $U(7,12)$, $U(4,21)$, $U(3,28)$, and $U(1,84)$. While it was not stated there it trivially follows from
\cite[eqs.\ 4.2, 1.8, 4.28, 4.50]{hb} that
\begin{equation*}
\df{U(7,12)}{U(4,21)}=\df{\psi(-q)\psi(-Q^3)}{\psi(-q^3)\psi(-Q)}.
\end{equation*}
Also from \cite[eqs.\ 1.5, 1.6, 1.9, 4.39, 4.40, 4.42, 4.51]{hb} we find that
\begin{equation*}
\df{U(3,28)}{U(1,84)}=\df{\psi(-q)\psi(-Q^3)}{\psi(-q^3)\psi(-Q)}.
\end{equation*}
Together that is
\begin{equation}\label{nd3}
\df{U(7,12)}{U(4,21)}=\df{U(3,28)}{U(1,84)}=\df{\psi(-q)\psi(-Q^3)}{\psi(-q^3)\psi(-Q)}.
\end{equation}
Next, we have the following from \eqref{thr1} and \eqref{thr2}:
\begin{equation}\label{nd4}
4E(q^6)E(Q^2)U(3,7)U(3,7,q^4)=2(3,0,7)+T_5(3,0,7)=2(3,0,7)+2(2,2,11),
\end{equation}
\begin{equation}
\begin{split}
4q^3E(q^6)E(Q^2)U(12,7)U(3,28) &= -4q^2E(q^6)E(Q^2)U(7,12)U(3,28)\\
                               &= 2(3,0,7)-T_5(3,0,7)=2(3,0,7)-2(2,2,11),
\end{split}
\end{equation}
\begin{equation}
4qE(q^2)E(Q^6)U(1,21)U(1,21,q^4)=2(1,0,21)-T_5(1,0,21)=2(1,0,21)-2(5,4,5),
\end{equation}
\begin{equation}\label{nd7}
4E(q^2)E(Q^6)U(4,21)U(1,84)=2(1,0,21)+T_5(1,0,21)=2(1,0,21)+2(5,4,5).
\end{equation}
Finally, \eqref{nd1} and \eqref{nd2} follow from \eqref{nd3}, \eqref{nd4}--\eqref{nd7}, and Ramanujan's identity \cite{tony}:
\begin{equation*}
U(3,7)=U(1,21).
\end{equation*}
\end{proof}
For our last application, we treat discriminant $-76$. Here we set $Q:=q^{19}$ and we provide identities similar to those that Ramanujan gave for his functions.

\begin{theorem} The following two facts are true (with $Q:=q^{19}$):
\begin{equation}\label{abt1}
4U(1,19,q)U(1,19,q^4)=3\k(q)^2\k(Q)^2+\k(-q)^2\k(-Q)^2+4q^5\df{1}{\k(-q^2)^2\k(-Q^2)^2},
\end{equation}
\begin{equation}\label{abt2}
4qU(1,19)U(1,19,-q)=\k(q)^2\k(Q)^2-\k(-q)^2\k(-Q)^2+12q^5\df{1}{\k(-q^2)^2\k(-Q^2)^2}.
\end{equation}
\end{theorem}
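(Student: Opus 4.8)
The plan is to run the discriminant $-76$ case through exactly the machinery used for \eqref{ap2}, with $(1,0,19)=\t(q)\t(Q)$ playing the role of the principal form and $(4,2,5)$ the non-principal one. First I would apply Theorem~\ref{thr1} with $r=1$, $s=19$ (legitimate since $r+s=20\equiv0\pmod5$) and identify the Hecke action $T_5(1,0,19)$. Because $5$ is represented among the classes of discriminant $-76$ only by $(4,\pm2,5)$, which share the same generating function, the reduction formula for $T_5$ gives $T_5(1,0,19)=2(4,2,5)$ (the same phenomenon as $T_5(1,0,39)=2(5,2,8)$, cf.\ \cite[p.~794]{Hecke}). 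With $Q=q^{19}$ and using $U(4,76)=U(1,19,q^4)$, identities \eqref{he1} and \eqref{he2} then become
\begin{align*}
4E(q^2)E(Q^2)U(1,19)U(1,19,q^4) &= 2(1,0,19)+2(4,2,5),\\
4qE(q^2)E(Q^2)U(1,76)U(4,19) &= 2(1,0,19)-2(4,2,5).
\end{align*}
The first of these is the engine for both \eqref{abt1} and \eqref{abt2}.

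Next I would pass to the variable $\k$. Since $\t(q)=E(q^2)\k(q)^2$ by \eqref{22i}, $\t(-q)=E(q^2)\k(-q)^2$, and $\psi(q^2)=E(q^2)/\k(-q^2)^2$ (using $\k(-q^2)=(q^2;q^4)_\i=E(q^2)/E(q^4)$), multiplying the right-hand side of \eqref{abt1} by $E(q^2)E(Q^2)$ converts it into $3\t(q)\t(Q)+\t(-q)\t(-Q)+4q^5\psi(q^2)\psi(Q^2)$. Comparing with the first relation above and using \eqref{trsdis} to write $\t(q)\t(Q)+\t(-q)\t(-Q)=2\t(q^4)\t(Q^4)+8q^{20}\psi(q^8)\psi(Q^8)$, I find that \eqref{abt1} is equivalent to the theta-level identity
\begin{equation*}
(4,2,5)=\t(q^4)\t(Q^4)+2q^5\psi(q^2)\psi(Q^2)+4q^{20}\psi(q^8)\psi(Q^8),
\end{equation*}
which I will call $(\star)$. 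To prove $(\star)$ I would feed $(4,2,5)=(5,2,4)$ through the $R$-calculus: \eqref{qrr} gives $R(0,0,0,0,1,76,2,8,10)$, and \eqref{prt1} with $\a_1=2$, $\b_1=38$, $m_1=1$ (so $a=0$) reduces this to $(4,2,5)=R(0,0,0,0,2,38,1,4,10)$. Expanding by \eqref{RGdef} gives the four summands $q^{5k^2}f(q^{4+2k},q^{4-2k})f(Q^{4+2k},Q^{4-2k})$ for $k=0,1,2,3$; the evaluations $f(q^4,q^4)=\t(q^4)$, $f(q^6,q^2)=\psi(q^2)$, $f(q^8,1)=2\psi(q^8)$, and the shift $f(q^{10},q^{-2})=q^{-2}\psi(q^2)$ collapse them to the three terms of $(\star)$ (the $k=1$ and $k=3$ summands both producing $q^5\psi(q^2)\psi(Q^2)$). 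This settles \eqref{abt1}.

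For \eqref{abt2} I would follow the route of \eqref{ap2}. Watson's dissection \eqref{gh2dis}, applied to the $G$'s and $H$'s in $U(1,19)$, yields
\begin{equation*}
\df{E(q^2)E(Q^2)}{E(q^8)E(Q^8)}U(1,19)=U(1,19,q^{16})+q^4U(1,19,-q^4)+qU(1,76,-q^4)+q^7U(4,19,-q^4),
\end{equation*}
whose last two summands sit in the residues $1,3\pmod4$. Extracting from the first relation above its part with exponents $\equiv0\pmod4$---for which the even parts of both $(1,0,19)$ and $(4,2,5)$ equal $\t(q^4)\t(Q^4)+4q^{20}\psi(q^8)\psi(Q^8)$ by \eqref{trsdis} and $(\star)$---and then replacing $q^4$ by $q$, I obtain
\begin{equation*}
4E(q^2)E(Q^2)U(1,19)\bigl(U(1,19,q^4)+qU(1,19,-q)\bigr)=4\t(q)\t(Q)+16q^5\psi(q^2)\psi(Q^2).
\end{equation*}
Subtracting the first relation and invoking $(\star)$ together with \eqref{trsdis} (which gives $\t(q)\t(Q)-\t(-q)\t(-Q)=4q^{19}\t(q^4)\psi(Q^8)+4q\t(Q^4)\psi(q^8)$) leaves
\begin{equation*}
4qE(q^2)E(Q^2)U(1,19)U(1,19,-q)=\t(q)\t(Q)-\t(-q)\t(-Q)+12q^5\psi(q^2)\psi(Q^2),
\end{equation*}
and dividing by $E(q^2)E(Q^2)$ and reinstating $\k$ via the dictionary above produces \eqref{abt2}.

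The main obstacle is purely one of bookkeeping, concentrated in two spots. The first is the $R$-calculus for $(\star)$: one must choose the efficient parameters so that only four summands appear and, crucially, recognize the shift $f(q^{10},q^{-2})=q^{-2}\psi(q^2)$ so that the $k=3$ term folds back onto the $k=1$ term and the four pieces collapse to three. The second is the even/odd separation in the dissection step for \eqref{abt2}, where the exponents must be tracked modulo $4$ and the odd summands $qU(1,76,-q^4)$ and $q^7U(4,19,-q^4)$ correctly discarded before the substitution $q^4\mapsto q$; this is exactly the delicate point already navigated in the proofs of \eqref{ap2} and \eqref{215}--\eqref{528}, so while error-prone it presents no genuine difficulty.
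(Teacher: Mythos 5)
Your proposal is correct, and the computations check out: the $R$-calculus evaluation $(4,2,5)=R(0,0,0,0,2,38,1,4,10)=\t(q^4)\t(Q^4)+2q^5\psi(q^2)\psi(Q^2)+4q^{20}\psi(q^8)\psi(Q^8)$ is right (the $k=1$ and $k=3$ summands do coincide via $f(q^{10},q^{-2})=q^{-2}\psi(q^2)$), the Watson-dissection formula for $U(1,19)$ is right, and the mod-$4$ extraction followed by $q^4\mapsto q$ does yield \eqref{abt2}. But your route is genuinely different from the paper's at its core. The paper never touches the $R$-machinery here; instead it imports two external identities, namely $U(4,19)U(1,76)=U(1,19,q^2)$ from \cite{hb} (its \eqref{abt5}) and Ramanujan's identity \eqref{abt5a}--\eqref{abt6} from \cite{db}. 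Adding \eqref{abt3} and \eqref{abt4} and substituting these gives \eqref{abt1} immediately, and subtracting gives the expansion \eqref{abt8} of $(4,2,5)$ --- which is precisely your $(\star)$ rewritten via \eqref{trsdis} --- after which \eqref{abt2} follows by the same even-part argument you carry out in full (the paper only gestures at it, ``arguing as in the proof of \eqref{ap2}''). So the shared skeleton is Theorem \ref{thr1} plus even-part extraction, but the key intermediate identity is obtained in opposite directions: the paper deduces the theta-expansion of $(4,2,5)$ from Ramanujan's Rogers--Ramanujan identity, whereas you compute it directly from \eqref{qrr}, \eqref{prt1}, and \eqref{RGdef}, so you never need \cite{hb} or \cite{db} (indeed your second displayed relation, the analogue of \eqref{abt4}, is never used). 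Your version is more self-contained and, when combined with the paper's \eqref{abt4} and \eqref{abt5}, would re-derive Ramanujan's identity \eqref{abt6} as a byproduct; the paper's version is shorter given the literature and better exhibits the two-way traffic between Rogers--Ramanujan identities and quadratic-form identities that is its theme. (One small caveat applying equally to both: the class-group computation $T_5(1,0,19)=2(4,2,5)$ is asserted, not proved, in each argument, with \cite[p.~794]{Hecke} as the implicit justification.)
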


\begin{proof}
Using \eqref{thr1}, we find that
\begin{align}
4E(q^2)E(Q^2)U(1,19)U(1,19,q^4)=2(1,0,19)+T_5(1,0,19)=2(1,0,19)+2(4,2,5),\label{abt3}\\
\intertext{and}
4qE(q^2)E(Q^2)U(4,19)U(1,76)=2(1,0,19)-T_5(1,0,19)=2(1,0,19)-2(4,2,5).\label{abt4}
\end{align}
From \cite{hb} we know that
\begin{equation}\label{abt5}
U(4,19)U(1,76)=U(1,19,q^2).
\end{equation}
Ramanujan observed that \cite{db}
\begin{align}
4qU(1,19,q^2)&=\df{\t(q)\t(Q)-\t(-q)\t(-Q)-4q^5\psi(q^2)\psi(Q^2)}{E(q^2)E(Q^2)}\label{abt5a}\\
&=\k(q)^2\k(Q)^2-\k(-q)^2\k(-Q)^2-4q^5\df{1}{\k(-q^2)^2\k(-Q^2)^2}.\label{abt6}
\end{align}
By adding the identities in \eqref{abt3} and \eqref{abt4} and by using \eqref{abt5} and \eqref{abt6} we arrive at \eqref{abt1}.
From \eqref{abt5a}, \eqref{abt5}, and \eqref{abt2}, we conclude that
\begin{equation}\label{abt8}
2(4,2,5)=\t(q)\t(Q)+\t(-q)\t(-Q)+4q^5\psi(q^2)\psi(Q^2).
\end{equation}
If we use \eqref{abt8} and equate the even parts in both sides of \eqref{abt3}, then by arguing as in the proof of \eqref{ap2}, we will arrive at \eqref{abt2}.
\end{proof}
\section{conclusion}
There are similar identities for many other discriminants that can be proved by establishing identities for the relevant Rogers--Ramanujan functions. For example, for quadratic forms of discriminant $-111$, we find that
\begin{equation}\label{cnj1}
 \df{(4,1,7)-(5,3,6)}{(1,1,28)-(4,1,7)}=\df{(3,3,10)-(4,1,7)}{(2,1,14)+(4,1,7)}=q^3\df{E(q)E(q^{111})}{E(q^3)E(q^{37})}.
 \end{equation}
Another set of examples is for quadratic forms of discriminant $-119$:
\begin{equation}
\df{(4,3,8)-(6,5,6)}{(1,1,30)+(5,1,6)}=q^4\df{E(q)E(q^{119})}{E(q^7)E(q^{17})}
\end{equation}
\begin{equation}
\left((4,3,8)-(2,1,15)\right)\left((3,1,10)-(5,1,6)\right)=\left((1,1,30)+(5,1,6)\right)\left((6,5,6)-(5,1,6)\right).
\end{equation}

There are further identities for quadratic forms not related to Rogers--Ramanujan functions. As an example, for quadratic forms of discriminant $-80$ we have
\begin{equation}\label{cnj4}
\df{(1,0,20)-(3,2,7)}{(3,2,7)+(4,0,5)}=q\df{E(q^{40})E(q^2)}{E(q^{10})E(q^8)}.
\end{equation}

The identities \eqref{cnj1}--- \eqref{cnj4}, along with similar types of identities, will be discussed elsewhere.

\section{Acknowledgment}
We would like to thank Rainer Schulze-Pillot and Keith Grizzell for their interest and helpful comments.

\end{document}